\documentclass{article}
\title{Derived rules for predicative set theory: \\ an application of sheaves}
\author{Benno van den Berg \& Ieke Moerdijk}
\date{November 16, 2011}
\usepackage{ast2}
\newcommand{\N}{\mathbb{N}}

\begin{document}

\maketitle

\begin{abstract}
\noindent
We show how one may establish proof-theoretic results for constructive Zermelo-Fraenkel set theory, such as the compactness rule for Cantor space and the Bar Induction rule for Baire space, by constructing sheaf models and using their preservation properties.
\end{abstract}

\section{Introduction}

This paper is concerned with Aczel's predicative constructive set theory {\bf CZF} and with related systems for predicative algebraic set theory; it also studies extensions of {\bf CZF}, for example by the axiom of countable choice.

We are particularly interested in certain statements about Cantor space $2^\N$, Baire space $\N^\N$ and the unit interval $[0,1]$ of Dedekind real numbers in such theories, namely the compactness of $2^\N$ and of $[0,1]$, and the  related ``Bar Induction'' property for Baire space.  The latter property states that if  $S$ is a set of finite sequences of natural numbers for which
\begin{itemize}
\item[-] for each $\alpha$ there is an $n$ such that  $<\alpha(0),\alpha(1),....,\alpha(n)>$ belongs to $S$ (``$S$ is a bar''),
\item[-] if $u$ belongs to $S$ then so does every extension of $u$ (``$S$ is monotone''),
\item[-] if $u$ is a finite sequence for which the concatenation $u*n$ belongs to $S$ for all $n$, then $u$ belongs to $S$ (``$S$ is inductive''),
\end{itemize}
then the empty sequence $< \, >$ belongs to $S$. It is well-known that these statements, compactness of $2^\N$ and of $[0,1]$ and Bar Induction for $\N^\N$, cannot be derived in intuitionistic set or type theories. In fact, they fail in sheaf models over locales, as explained in \cite{fourmanhyland79}.  Sheaf models can also be used to show that all implications in the chain
\[ (BI) \Longrightarrow (FT) \Longrightarrow (HB) \]
are strict (where BI stands for Bar Induction for $\N^\N$, while FT stands for the Fan Theorem (compactness of $2^\N$) and HB stands for the Heine-Borel Theorem (compactness of the unit interval), see \cite{moerdijk84}).

On the other hand, one may also define Cantor space $\mathbf{C}$, Baire space $\mathbf{B}$, and the unit interval $\mathbf{I}$ as locales or formal spaces. Compactness is provable for formal Cantor space, as is Bar Induction for formal Baire space.  Although Bar Induction may seem to be a statement of a slightly different nature, it is completely analogous to compactness, as explained in \cite{fourmanhyland79} as well. Indeed, the locales $\mathbf{C}$ and  $\mathbf{I}$ have enough points (i.e., are true topological spaces) iff the spaces $2^\N$ and $[0,1]$ are compact, while the locale $\mathbf{B}$ has enough points iff Bar Induction holds for the space $\N^\N$. The goal of this paper is to prove that the compactness properties of these (topological) spaces do hold for ${\bf CZF}$ (with countable choice), however, when they are reformulated as derived rules. Thus, for example, Cantor space is compact in the sense that if $S$ is a property of finite sequences of 0's and 1's which is definable in the language of set theory and for which ${\bf CZF}$ proves
\begin{quote}
for all $\alpha$ in $2^\N$ there is an $n$ such that $<\alpha(0),\alpha(1),....,\alpha(n)>$  belongs to $S$ (``$S$ is a cover''),
\end{quote}
then there are such finite sequences $u_1,..., u_k$ for which ${\bf CZF}$ proves that each $u_i$ belongs to $S$ as well as that for each $\alpha$ as above there are an $n$ and an $i$ such that  $<\alpha(0),\alpha(1),....,\alpha(n)> = u_i$. We will also show that compactness of the unit interval and Bar Induction hold when formulated as derived rules for {\bf CZF} and suitable extensions of {\bf CZF}, respectively.

This is a proof-theoretic result, which we will derive by purely model-theoretic means, using sheaf models for ${\bf CZF}$ and a doubling construction for locales originating with Joyal. Although our results for the particular theory ${\bf CZF}$ seem to be new, similar results occur in the literature for other constructive systems, and are proved by various methods, such as purely proof-theoretic methods, realizability methods or our sheaf-theoretic methods.\footnote{For example, Beeson in \cite{beeson77} used a mixture of forcing and realizability for Feferman-style systems for explicit mathematics. Hayashi used proof-theoretic methods for {\bf HAH}, the system for higher-order Heyting arithmetic corresponding to the theory of elementary toposes in \cite{hayashi80}, and sheaf-theoretic methods in \cite{hayashi82} for the impredicative set theory {\bf IZF}, an intuitionistic version of Zermelo-Fraenkel set theory. Grayson \cite{grayson84} gives a sheaf-theoretic proof of a local continuity rule for the system {\bf HAH}, and mentions in \cite{grayson83} that the method should also apply to systems without powerset.} In this context it is important to observe that derived rules of the kind ``if $T$ proves $\varphi$, then $T$ proves $\psi$'' are different results for different $T$, and can be related only in the presence of conservativity results. For example, a result for {\bf CZF} like the ones above does not imply a similar result for the extension of {\bf CZF} with countable choice, or vice versa.

Our motivation to give detailed proofs of several derived rules comes from various sources. First of all, the related results just mentioned predate the theory {\bf CZF}, which is now considered as one of the most robust axiomatisations of predicative constructive set theory and is closely related to  Martin-L\"of type theory \cite{aczel78, aczel82, aczel86}. Secondly, the theory of sheaf models for {\bf CZF} has only recently been firmly established (see \cite{gambino02, gambino06, bergmoerdijk10b}), partly in order to make applications to proof theory such as the ones exposed in this paper possible. Thirdly, the particular sheaf models over locales necessary for our application hinge on some subtle properties and constructions of locales (or formal spaces) in the predicative context, such as the inductive definition of covers in formal Baire space in the absence of power sets. These aspects of predicative locale theory have only recently emerged in the literature \cite{coquandetal03, aczel06}. In these references, the regular extension axiom {\bf REA} plays an important role. In fact, one needs an extension of {\bf CZF}, which on the one hand is sufficiently strong to handle suitable inductive definitions, while on the other hand it is stable under sheaf extensions. One possible choice is the extension of {\bf CZF} by the smallness axiom for W-types and the axiom of multiple choice {\bf AMC} (see \cite{bergmoerdijk10}).

The results of this paper were presented by the authors on various occasions: by the second author in July 2009 at the TACL'2009 conference in Amsterdam and in March 2010 in the logic seminar in Manchester and by the first author in May 2010 at the meeting ``Set theory: classical and constructive'', again in Amsterdam. We would like to thank the organizers of all these events for giving us these opportunities. We are also grateful to the referee for a very careful reading of the original manuscript and to the editors for their patience.

\section{Constructive set theory}

Throughout the paper we work in Aczel's constructive set theory {\bf CZF}, or extensions thereof. (An excellent reference for {\bf CZF} is \cite{aczelrathjen01}.)

\subsection{CZF}

{\bf CZF} is a set theory whose underlying logic is intuitionistic and whose axioms are:
\begin{description}
\item[Extensionality:] $\forall x \, ( \, x \in a \leftrightarrow x \in b \, ) \rightarrow a = b$.
\item[Empty set:] $\exists x  \, \forall y  \, \lnot y \in x $.
\item[Pairing:] $\exists x \, \forall y \, (\,  y \in x \leftrightarrow y = a \lor y = b \, )$.
\item[Union:] $\exists x  \, \forall y \, ( \, y \in x \leftrightarrow \exists z \in a \, y \in z  \, )$.
\item[Set induction:] $\forall x \, \big(\forall y  \in x \, \varphi(y) \rightarrow \varphi(x)\big) \rightarrow \forall x \, \varphi(x)$.
\item[Infinity:] $\exists a \, \big(  \, ( \, \exists  x \, x \in a \, ) \land ( \, \forall x  \in a \, \exists y \in a \, x \in y \, ) \big)$.
\item[Bounded separation:] $\exists x \,  \forall y \, \big( \, y \in x \leftrightarrow y \in a \land \varphi(y) \, \big) $, for any bounded formula $\varphi$ in which $a$ does not occur.
\item[Strong collection:] $\forall x \in a \, \exists y \, \varphi(x,y) \rightarrow \exists b \, \mbox{B}(x \in a, y \in b) \, \varphi$.
\item[Subset collection:] $\exists c \, \forall z \, \big( \, \forall x \in a \, \exists y \in b \, \varphi(x,y,z) \rightarrow \exists d \in c \, \mbox{B}(x \in a, y \in d) \, \varphi(x, y, z) \, \big) $.
\end{description}
In the last two axioms, the expression
\[ \mbox{B}(x \in a, y \in b) \, \varphi. \]
has been used as an abbreviation for $\forall x \in a \, \exists y \in b \, \varphi \land \forall y \in b \, \exists x \in a \, \varphi$.

Throughout this paper, we will use \emph{denumerable} to mean ``in bijective correspondence with the set of natural numbers'' and \emph{finite} to mean ``in bijective correspondence with an initial segment of natural numbers''. A set which is either finite or denumerable, will be called \emph{countable}. In addition, we will call a set \emph{K-finite}, if it is the surjective image of an initial segment of the natural numbers. Observe that if a set has decidable equality, then it is finite if and only if it is K-finite.

In this paper we will also consider the following choice principles (countable choice and dependent choice):
\begin{displaymath}
\begin{array}{ll}
{\bf AC}_\omega & (\forall n \in \NN) (\exists x \in X) \varphi(n, x) \to (\exists f: \NN \to X) (\forall n \in \NN) \, \varphi(n, f(n)) \\ \\
{\bf DC}  & (\forall x \in X) \, (\exists y \in X) \, \varphi(x, y) \to \\ & (\forall x_0 \in X) \, (\exists f: \NN \to X) \, [ \, f(0) = x_0 \land (\forall n \in \NN) \, \varphi(f(n), f(n+1)) \, ]
\end{array}
\end{displaymath}
It is well-known that ${\bf DC}$ implies ${\bf AC}_\omega$, but not conversely (not even in {\bf ZF}). Any use of these additional axioms will be indicated explicitly.

\subsection{Inductive definitions in CZF}

\begin{defi}{closedsubclasses}
Let $S$ be a class. We will write Pow($S$) for the class of subsets of $S$. An \emph{inductive definition} is a subclass $\Phi$ of ${\rm Pow}(S) \times S$. One should think of the pairs $(X, a) \in \Phi$ as rules of the kind: if all elements in $X$ have a certain property, then so does $a$. Accordingly, a subclass $A$ of $S$ will be called \emph{$\Phi$-closed}, if
\[ X \subseteq A \Rightarrow a \in A \]
whenever $(X, a)$ is in $\Phi$.
\end{defi}

In {\bf CZF} one can prove that for any inductive definition $\Phi$ on a class $S$ and for any subclass $U$ of $S$ there is a least $\Phi$-closed subclass of $S$ containing $U$ (see \cite{aczelrathjen01}). We will denote this class by $I(\Phi, U)$. However, for the purposes of predicative locale theory one would like to have more:

\begin{theo}{setcompactness} {\rm (Set Compactness)}
If $S$ and $\Phi$ are sets, then there is a subset $B$ of ${\rm Pow}(S)$ such that for each set $U \subseteq S$ and each $a \in I(\Phi, U)$ there is a set $V \in B$ such that $V \subseteq U$ and $a \in I(\Phi, V)$.
\end{theo}

This result cannot be proved in {\bf CZF} proper, but it can be proved in extensions of {\bf CZF}. For example, this result becomes provable in {\bf CZF} extended with Aczel's regular extension axiom {\bf REA} \cite{aczelrathjen01} or in {\bf CZF} extended with the axioms {\bf WS} and {\bf AMC} \cite{bergmoerdijk10}. The latter extension is known to be stable under sheaves \cite{moerdijkpalmgren02, bergmoerdijk10}, while the former presumably is as well. Below, we will denote by {\bf CZF$^+$} any extension of {\bf CZF} which allows one to prove set compactness and which is stable under sheaves.

\section{Predicative locale theory}

In this section we have collected the definitions and results from predicative locale theory that we need in order to establish derived rules for {\bf CZF}. We have tried to keep our presentation self-contained, so that this section can actually be considered as a crash course on predicative locale theory or ``formal topology''. (In a predicative context, locales are usually called ``formal spaces'', hence the name. Some important references for formal topology are \cite{fourmangrayson82, coquandetal03, sambin03, aczel06} and, unless we indicate explicitly otherwise, the reader may find the results explained in this section in these sources.)

\subsection{Formal spaces}

\begin{defi}{formalspace}
A \emph{formal space} is a small site whose underlying category is a preorder. By a preorder, we mean a set $\mathbb{P}$ together with a small relation $\leq \, \subseteq \, \mathbb{P} \times \mathbb{P}$ which is both reflexive and transitive. If $a$ is an element of $\mathbb{P}$ then we will write $\downarrow a$ or $M_a$ for  $\{ p \in \mathbb{P} \, : \, p \leq a \}$, and if $\alpha$ is a subset of $\mathbb{P}$ then we will write $\downarrow \alpha = \{ p \in \mathbb{P} \, : \, (\exists a \in \alpha) \, p \leq a \}$. For the benefit of the reader, we repeat the axioms for a site from \cite{bergmoerdijk10b} for the special case of preorders.

Fix an element $a \in \mathbb{P}$. By a \emph{sieve} on $a$ we will mean a downwards closed \emph{subset} of $\downarrow a$. The set $M_a = \, \downarrow a$ will be called the \emph{maximal sieve} on $a$. In a predicative setting, the sieves on $a$ form in general only a class.

If $S$ is a sieve on $a$ and $b \leq a$, then we write $b^*S$ for the sieve
\[ b^*S = S \, \cap \downarrow b \]
on $b$. We will call this sieve \emph{the restriction of $S$ to $b$}.

A \emph{(Grothendieck) topology} Cov on $\mathbb{P}$ is given by assigning to every object $a \in \mathbb{P}$ a collection of sieves ${\rm Cov}(a)$ such that the following axioms are satisfied:
\begin{description}
\item[(Maximality)] The maximal sieve $M_a$ belongs to ${\rm Cov}(a)$.
\item[(Stability)] If $S$ belongs to ${\rm Cov}(a)$ and $b \leq a$, then $b^*S$ belongs to ${\rm Cov}(b)$.
\item[(Local character)] Suppose $S$ is a sieve on $a$. If $R \in {\rm Cov}(a)$ and all restrictions $b^*S$ to elements $b \in R$ belong to ${\rm Cov}(b)$, then $S \in {\rm Cov}(a)$.
\end{description}
A pair $(\mathbb{P}, {\rm Cov})$ consisting of a preorder $\mathbb{P}$ and a Grothendieck topology Cov on it is called a \emph{formal topology} or a \emph{formal space}. If a formal topology $(\mathbb{P}, {\rm Cov})$ has been fixed, the elements of $\mathbb{P}$ will be referred to as \emph{basic opens} or \emph{basis elements} and the sieves belonging to some ${\rm Cov}(a)$ will be referred to as the \emph{covering sieves}. If $S$ belongs to ${\rm Cov}(a)$ one says that $S$ is a \emph{sieve covering} $a$, or that $a$ \emph{is covered by} $S$.
\end{defi}

To develop a predicative theory of locales one needs to assume that the formal spaces one works with have a presentation, in the following sense. (Note that it was a standing assumption in \cite{bergmoerdijk10b} that sites had a presentation.)

\begin{defi}{setprformalsp}
A \emph{presentation} for a formal topology $(\mathbb{P}, {\rm Cov})$ is a function BCov assigning to every $a \in \mathbb{P}$ a \emph{small} collection of \emph{basic covering sieves} BCov($a$)  such that:
\[ S \in {\rm Cov}(a) \Leftrightarrow \exists R \in {\rm BCov}(a): R \subseteq S. \]
A formal topology which has a basis will be called \emph{presentable}.
\end{defi}

One of the ways in which presentable formal spaces behave better than general ones, is that, in {\bf CZF}, only presentable formal spaces give rise to categories of sheaves again modelling {\bf CZF} (see \reftheo{sheavessoundforCZF} and \refrema{presneeded} below). For this reason, it will be an important theme in this section to sort out whether the concrete formal spaces we work with can be proved to be presentable in {\bf CZF} or in extensions thereof.

\begin{rema}{curi}
Another property of presentable formal spaces, which is sometimes useful, is that for a given sieve $S$ the collection of $a \in \mathbb{P}$ which are covered by $S$ form a set, as it can be written as
\[ \{ a \in \mathbb{P} \, : \, (\exists R \in \mbox{BCov}(a)) \, (\forall r \in R) \, r \in S \}. \]
This property is often included in the definition of a formal space, but for our purposes this was not necessary. (We thank Giovanni Curi for bringing this issue to our attention; see also \cite{curi11}.)
\end{rema}

\subsection{Inductively generated formal topologies}

\begin{defi}{coveringsystem}
If $\mathbb{P}$ is a preorder, then a \emph{covering system} is a map $C$ assigning to every $a \in \mathbb{P}$ a small collection $C(a)$ of subsets of $\downarrow a$ such that the following covering axiom holds:
\begin{quote}
for every $\alpha \in C(p)$ and $q \leq p$, there is a $\beta \in C(q)$ such that \\ $\beta \subseteq q^* (\downarrow \alpha) = \{ r \leq q \, : \, (\exists a \in \alpha) \, r \leq a \}$.
\end{quote}
\end{defi}

Every covering system generates a formal space. Indeed, every covering system gives rise to an inductive definition $\Phi$ on $\mathbb{P}$, given by:
\[ \Phi = \{ (\alpha, a) \, : \, \alpha \in C(a) \}. \]
So we may define:
\[ S \in {\rm Cov}(a) \Leftrightarrow a \in I(\Phi, S). \]
Before we show that this is a Grothendieck topology, we first note:

\begin{lemm}{sievesgensieves}
If $S$ is a downwards closed subclass of $\downarrow a$, then so is $I(\Phi, S)$. Also, $x \in I(\Phi, S)$ iff $x \in I(\Phi, x^*S)$.
\end{lemm}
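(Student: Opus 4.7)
The lemma asserts two properties of $I(\Phi,S)$: downwards closure, and the equivalence $x \in I(\Phi,S) \Leftrightarrow x \in I(\Phi, x^*S)$. Both will be proved by the standard induction principle for $I(\Phi,-)$: I will exhibit a test class $A$ (respectively $B$) that contains $S$ and is $\Phi$-closed, so that $I(\Phi,S) \subseteq A$ (respectively $B$) follows by minimality.

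For downwards closure, I would take
\[ A \;=\; \{ x \in \mathbb{P} \, : \, \downarrow x \subseteq I(\Phi,S) \}. \]
The inclusion $S \subseteq A$ is a direct consequence of the hypothesis: if $s \in S$ and $y \leq s$, then $y \leq a$, whence $y \in S$ by downwards closure of $S$, and therefore $y \in I(\Phi,S)$. The crucial step is $\Phi$-closure of $A$, and this is precisely where the covering axiom for $C$ is used. Given $(\alpha,b) \in \Phi$ with $\alpha \subseteq A$ and any $y \leq b$, the covering axiom supplies a $\beta \in C(y)$ with $\beta \subseteq y^*(\downarrow \alpha)$; each $r \in \beta$ then lies below some $a' \in \alpha \subseteq A$, hence $r \in I(\Phi,S)$. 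Thus $\beta \subseteq I(\Phi,S)$, and since $(\beta,y) \in \Phi$ and $I(\Phi,S)$ is $\Phi$-closed, $y \in I(\Phi,S)$. This gives $\downarrow b \subseteq I(\Phi,S)$, i.e., $b \in A$.

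For the second assertion, the backward direction is immediate from monotonicity: $x^*S \subseteq S$ gives $I(\Phi, x^*S) \subseteq I(\Phi,S)$ because $I(\Phi,S)$ is a $\Phi$-closed class containing $x^*S$. For the forward direction, I would use
\[ B \;=\; \{ x \in \mathbb{P} \, : \, x \in I(\Phi, x^*S) \}. \]
Here $S \subseteq B$ holds since any $s \in S$ satisfies $s \in s^*S \subseteq I(\Phi, s^*S)$. For $\Phi$-closure of $B$, let $(\alpha,b) \in \Phi$ with $\alpha \subseteq B$. For each $a' \in \alpha$ we have $a' \leq b$, so $(a')^*S \subseteq b^*S$, and hence $I(\Phi, (a')^*S) \subseteq I(\Phi, b^*S)$; consequently $a' \in I(\Phi, b^*S)$. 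Thus $\alpha \subseteq I(\Phi, b^*S)$, and $\Phi$-closure forces $b \in I(\Phi, b^*S)$, i.e., $b \in B$.

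The main obstacle is the $\Phi$-closure step in the first part. The covering axiom is exactly the condition that allows an arbitrary basic cover at $b$ to be refined, along any $y \leq b$, to a basic cover at $y$ whose members sit below elements of the original cover; without it, one could not propagate the downwards-closure invariant through the inductive generation of $I(\Phi,S)$. The second part, by contrast, is essentially formal once one observes the elementary set-theoretic inclusion $(a')^*S \subseteq b^*S$ whenever $a' \leq b$.
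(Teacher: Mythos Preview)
Your proof is correct and follows precisely the approach the paper sketches: the paper simply states that both assertions ``are now proved by an induction argument, using the covering axiom,'' and your two test classes $A$ and $B$ carry out exactly those inductions. Your identification of the covering axiom as the crucial ingredient for the $\Phi$-closure of $A$ (and its irrelevance for the second part) is spot on.
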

\begin{proof}
The class $I(\Phi, S)$ is inductively generated by the rules:
\begin{displaymath}
\begin{array}{cc}
\frac{ r \in S}{r \in I(\Phi, S)} & \frac{\alpha \subseteq I(\Phi, S) \quad \alpha \in C(r)}{r \in I(\Phi, S)}
\end{array}
\end{displaymath}
Both statements are now proved by an induction argument, using the covering axiom.
\end{proof}

\begin{theo}{coveringsystem}
Every covering system generates a formal topology. More precisely, for every covering system $C$ there is a smallest Grothendieck topology ${\rm Cov}$ such that
\[ \alpha \in C(a) \Longrightarrow \downarrow \alpha \in {\rm Cov}(a). \]
In ${\bf CZF^+}$ one can show that this formal topology has a presentation.
\end{theo}
\begin{proof}
Note that the Cov relation is inductively generated by:
\begin{displaymath}
\begin{array}{cc}
\frac{ a \in S}{S \in {\rm Cov}(a)} & \frac{\alpha \in C(a) \quad (\forall x \in \alpha) \, x^*S \in {\rm Cov}(x)}{S \in {\rm Cov}(a)}
\end{array}
\end{displaymath}
Maximality is therefore immediate, while stability and local character can be established using straightforward induction arguments. Therefore Cov is indeed a topology. The other statements of the theorem are clear.
\end{proof}

\begin{theo}{indoncovers} {\rm (Induction on covers)} Let $(\mathbb{P}, {\rm Cov})$ be a formal space, whose topology ${\rm Cov}$ is inductively generated by a covering system $C$, as in the previous theorem. Suppose $P(x)$ is a property of basis elements $x \in \mathbb{P}$, such that
\[ \forall \alpha \in C(x) \big( \, ((\forall y \in \alpha) \, P(y)) \to P(x) \big), \]
and suppose $S$ is a cover of an element $a \in \mathbb{P}$ such that $P(y)$ holds for all $y \in S$. Then $P(a)$ holds.
\end{theo}
\begin{proof}
Suppose $P$ has the property in the hypothesis of the theorem. Define:
\[ S \in {\rm Cov}^*(p) \Leftrightarrow (\forall q \leq p) \, \big( \, ((\forall r \in q^*S) \, P(r)) \to P(q) \, \big). \]
Then one checks that ${\rm Cov}^*$ is a topology extending $C$. So by \reftheo{coveringsystem} we have $S \in {\rm Cov}(a) \subseteq {\rm Cov}^*(a)$, from which the desired result follows.
\end{proof}

\subsection{Formal Baire and Cantor space}

We will write $X^{\lt \NN}$ for the set of finite sequences of elements from $X$. Elements of $X^{\lt \NN}$ will usually be denoted by the letters $u, v, w, \ldots$. Also, we will write $u \leq v$ if $v$ is an initial segment of $u$, $|v|$ for the length of $v$ and $u * v$ for the concatenation of sequences $u$ and $v$. If $u \in X^{\lt \NN}$ and $q \geq |u|$ is a natural number, then we define $u[q]$ by:
\[ u[q] = \{ v \in X^{\lt \NN} \, : \, |v| = q \mbox{ and } v \leq u \}. \]

The basis elements of formal Cantor space $\mathbf{C}$ are finite sequences $u \in 2^{\lt \NN}$ (with $2 = \{ 0, 1 \}$), ordered by saying that $u \leq v$, whenever $v$ is an initial segment of $u$. Furthermore, we put
\[ S \in {\rm Cov}(u) \Leftrightarrow (\exists q \geq |u|) \, u[q]  \subseteq  S \]
and ${\rm BCov}(u) = \{ u[q] \, : \, q \geq |u| \}$. Note that this will make formal Cantor space compact \emph{by definition} (where a formal space is compact, if for every cover $S$ of $u$ there is a K-finite subset $\alpha$ of $S$ such that $\downarrow \alpha \in {\rm Cov}(u)$).

\begin{prop}{CSassite}
Formal Cantor space is a presentable formal space.
\end{prop}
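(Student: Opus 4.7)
The plan is to verify that the proposed Cov is a Grothendieck topology and that BCov is then automatically a presentation in the sense of Definition~2.4. The presentation part is essentially free: $\mathrm{BCov}(u) = \{u[q] : q \geq |u|\}$ is indexed by a subset of $\NN$, hence small, and the defining equivalence $S \in \mathrm{Cov}(u) \Leftrightarrow (\exists q \geq |u|)\, u[q] \subseteq S$ is literally the condition $S \in \mathrm{Cov}(u) \Leftrightarrow \exists R \in \mathrm{BCov}(u) \, R \subseteq S$, once one reads $u[q]$ as a shorthand for the sieve $\downarrow u[q]$ it generates (the containment $u[q] \subseteq S$ is equivalent to $\downarrow u[q] \subseteq S$ because $S$ is already downwards closed).

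So the content of the proof is checking the three axioms. Maximality is immediate from $u[|u|] = \{u\} \subseteq M_u$. For stability, given a witness $u[q] \subseteq S$ and $v \leq u$, I would take $q' := \max(q, |v|)$ and observe that every $w \in v[q']$ extends both $v$ and $u$ with $|w| = q' \geq q$, so $w \in \downarrow u[q] \subseteq S$ and $w \leq v$, giving $v[q'] \subseteq v^* S$.

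The one step that requires genuine care is local character. Given $R \in \mathrm{Cov}(u)$ witnessed by $u[q_0] \subseteq R$ and, for each $v \in u[q_0]$, a witness $v[q_v] \subseteq v^* S$, the plan is to set $Q := \max\{q_v : v \in u[q_0]\}$ and verify $u[Q] \subseteq S$. Any $w \in u[Q]$ has a unique initial segment $v$ of length $q_0$ lying in $u[q_0]$, and then $w \in v[Q] \subseteq \downarrow v[q_v] \subseteq v^* S \subseteq S$. The delicate point here, which one might worry is the main obstacle, is forming the maximum over $u[q_0]$; but $u[q_0]$ is in explicit bijection with $2^{q_0 - |u|}$, hence finite with decidable equality, so the maximum of naturals indexed over it is routinely available in {\bf CZF} and the whole argument goes through without any choice principle or extension.
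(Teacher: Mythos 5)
Your proof is correct and follows essentially the same route as the paper's: the paper also reduces everything to local character, picks $u[q_0]\subseteq R$, uses finiteness of $u[q_0]$ to choose the witnesses $q_v$ as a function of $v$, and takes their maximum to get $u[Q]\subseteq S$. The only differences are cosmetic: you spell out maximality and stability (which the paper leaves to the reader) and you make explicit the harmless identification of $u[q]$ with the sieve $\downarrow u[q]$ in the presentation.
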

\begin{proof}
We leave maximality and stability to the reader and only check local character. Suppose $S$ is a sieve on $u$ for which a sieve $R \in {\rm Cov}(u)$ can be found such that for all $v \in R$ the sieve $v^*S = (\downarrow v) \cap S$ belongs to ${\rm Cov}(v)$. Since $R \in {\rm Cov}(u)$ there is $q \geq |u|$ such that $u[q] \subseteq R$. Therefore we have for any $v \in u[q]$ that $(\downarrow v) \cap S$ covers $v$ and hence that there is a $r \geq q$ such that $v[r] \subseteq S$. Since the set $u[q]$ is finite, the elements $r$ can be chosen as a function $v$. For $p = \max\{ r_v \, : \, v \in u[q] \}$, it holds that
\[ u[p] = \bigcup_{v \in u[q]} v[p] \subseteq S, \]
as desired.
\end{proof}

Formal Baire space $\mathbf{B}$ is an example of an inductively defined space. The underlying poset has as elements finite sequences $u \in \NN^{\lt \NN}$, ordered as for Cantor space above. The Grothendieck topology is inductively generated by:
\[ C(u) = \big\{ \{ u \, *  \, \langle n \rangle \, : \, n \in \N \} \big \}, \]
and therefore we have the following induction principle:
\begin{coro}{formalBIM} {\rm (Bar Induction for formal Baire space)}
Suppose $P(x)$ is a property of finite sequences $u \in \N^{\lt \NN}$, such that
\[ \big( (\forall n \in \N) \, P(u * \langle n \rangle ) \big) \to P(u), \]
and suppose that $S$ is a cover of $v$ in formal Baire space such that $P(x)$ holds for all $x \in S$. Then $P(v)$ holds.
\end{coro}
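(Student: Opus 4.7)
The plan is to deduce this as an immediate consequence of \reftheo{indoncovers} (induction on covers) applied to the specific covering system defining formal Baire space. First I would verify that setting $C(u) = \{\{u * \langle n \rangle : n \in \N\}\}$ does produce a covering system in the sense of \refdefi{coveringsystem}, i.e.\ that it satisfies the covering axiom. This reduces to the following observation: given $q \leq p$ in $\N^{\lt \N}$ (so $q$ extends $p$), I must supply $\beta \in C(q)$ with $\beta \subseteq q^*(\downarrow \{ p * \langle n\rangle : n \in \N \})$. The only candidate is $\beta = \{q * \langle m \rangle : m \in \N\}$; each element of $\beta$ is trivially below $q$, and it extends some $p * \langle n \rangle$, because $q$ itself either equals $p$ (in which case $q * \langle m\rangle = p * \langle m \rangle$ is already in $\alpha$) or properly extends some $p * \langle n \rangle$ (so $q * \langle m\rangle$ does too).

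Once this is checked, \reftheo{coveringsystem} identifies formal Baire space as the formal topology generated by $C$, and \reftheo{indoncovers} yields the induction principle: whenever $\forall \alpha \in C(x)\, ((\forall y \in \alpha)\, P(y) \to P(x))$ holds and $S$ is a cover of $v$ with $P$ holding on all of $S$, then $P(v)$ holds. Since $C(u)$ contains the single set $\{u * \langle n \rangle : n \in \N\}$, its induction hypothesis collapses to $((\forall n \in \N)\, P(u * \langle n\rangle)) \to P(u)$, which is precisely the hypothesis of the corollary. So the corollary follows directly.

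There is essentially no obstacle; the real work has already been done in proving \reftheo{indoncovers}, and the corollary is just a matter of unwinding the covering system for formal Baire space and noting that the general induction hypothesis specialises exactly to the stated bar-induction hypothesis. The only tiny calculation is the verification of the covering axiom sketched above, and it is entirely straightforward.
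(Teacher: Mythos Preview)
Your proposal is correct and matches the paper's approach: the corollary is stated immediately after the covering system for formal Baire space is introduced, with the remark ``and therefore we have the following induction principle,'' making it a direct specialisation of \reftheo{indoncovers}. Your additional verification of the covering axiom is routine and accurate, and the paper simply leaves it implicit.
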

Note that this means that Bar Induction for formal Baire space is \emph{provable}. 

As a special case of \reftheo{coveringsystem} we have:
\begin{coro}{formalBSpresentableinCZF+}
In {\rm {\bf CZF$^+$}} one can show that formal Baire space is presentable.
\end{coro}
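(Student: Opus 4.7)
The plan is to apply \reftheo{coveringsystem} directly to the covering system $C(u) = \big\{ \{ u * \langle n \rangle : n \in \N \} \big\}$ that was used to define the topology on formal Baire space. That theorem already states that, in {\bf CZF}$^+$, every covering system generates a \emph{presentable} formal topology, so the only thing left to verify is that this particular $C$ satisfies the covering axiom.

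To check the axiom, fix $u \in \N^{<\N}$ and let $\alpha = \{ u * \langle n \rangle : n \in \N \}$ be the unique element of $C(u)$. Given $q \leq u$, i.e.\ a finite sequence $q$ extending $u$, set $\beta := \{ q * \langle m \rangle : m \in \N \} \in C(q)$. Each member of $\beta$ is $\leq q$ by construction, so it suffices to check that every $q * \langle m \rangle$ lies below some member of $\alpha$. When $q = u$ this is immediate, since then $q * \langle m \rangle \in \alpha$. Otherwise $q = u * \langle n_0 \rangle * w$ for some $n_0 \in \N$ and $w \in \N^{<\N}$, whence $q * \langle m \rangle \leq q \leq u * \langle n_0 \rangle \in \alpha$. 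Thus $\beta \subseteq q^{*}(\downarrow \alpha)$, as required.

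With the covering axiom established, \reftheo{coveringsystem} produces a presentation of the generated Grothendieck topology, which by construction coincides with the one defining formal Baire space. No genuinely hard step remains at this level: the substance of the result is already absorbed into \reftheo{coveringsystem}, whose presentability clause rests on set compactness --- precisely the ingredient that the passage from {\bf CZF} to {\bf CZF}$^+$ is designed to supply.
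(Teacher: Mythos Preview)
Your proposal is correct and follows exactly the paper's approach: the corollary is stated there simply as a special case of \reftheo{coveringsystem}, applied to the covering system $C(u) = \big\{ \{ u * \langle n \rangle : n \in \N \} \big\}$. Your explicit verification of the covering axiom is a detail the paper leaves implicit, but otherwise the argument is identical.
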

In contrast, we observe that formal Baire space cannot be shown to be presentable in {\bf CZF} proper (a proof can be found in Appendix A).

\subsection{Points of a formal space}

The characteristic feature of formal topology is that one takes the notion of basic open as primitive and the notion of a point as derived. In fact, the notion of a point is defined as follows:
\begin{defi}{point}
A \emph{point} of a formal space $(\mathbb{P},  \mbox{Cov})$ is an inhabited subset $\alpha \subseteq \mathbb{P}$ such that
\begin{enumerate}
\item[(1)] $\alpha$ is upwards closed,
\item[(2)] $\alpha$ is downwards directed,
\item[(3)] if $S \in \mbox{Cov}(a)$ and $a \in \alpha$, then $S \cap \alpha$ is inhabited.
\end{enumerate}
We say that a point $\alpha$ belongs to (or is contained in) a basic open $p \in \mathbb{P}$ if $p \in \alpha$, and we will write ${\rm ext}(p)$ for the class of points of the basic open $p$.
\end{defi}

If $(\mathbb{P}, {\rm Cov})$ is a formal space whose points form a set, one can define a new formal space ${\rm pt}(\mathbb{P}, {\rm Cov}) = (\mathbb{P}_{pt}, {\rm Cov}_{pt})$, whose set of basic opens $\mathbb{P}_{pt}$ is again $\mathbb{P}$, but now ordered by:
\[ p \leq_{pt} q \Leftrightarrow {\rm ext}(p) \subseteq {\rm ext}(q), \]
while the topology is defined by:
\[ S \in {\rm Cov}_{pt}(a) \Leftrightarrow {\rm ext}(a) \subseteq \bigcup_{p \in S} {\rm ext}(p). \]
The space ${\rm pt}(\mathbb{P}, {\rm Cov})$ will be called the \emph{space of points} of the formal space $(\mathbb{P}, {\rm Cov})$. It follows immediately from the definition of a point that
\begin{eqnarray*}
p \leq q & \Rightarrow & p \leq_{pt} q, \\
S \in {\rm Cov}(a) & \Rightarrow & S \in {\rm Cov}_{pt}(a).
\end{eqnarray*}
The other directions of these implications do not hold, in general. Indeed, if they do, one says that the formal space \emph{has enough points}. It turns out that one can quite easily construct formal spaces that do not have enough points (even in a classical metatheory).

Note that points in formal Cantor space are really functions $\alpha: \N \to \{ 0, 1 \}$ and points in formal Baire space are functions $\alpha: \N \to \N$. In fact, their spaces of points are (isomorphic to) ``true'' Cantor space and ``true'' Baire space, respectively. When talking about such points $\alpha$ we will use $\alpha$ both to denote a subset as in \refdefi{point} and a function on $\NN$. In particular, the equivalent notations
\[ u \in \alpha \Leftrightarrow \alpha \leq u \]
both indicate that $u$ is an initial segment of $\alpha$.

The following two results were already mentioned in the introduction and are well-known in the impredicative settings of topos theory or intuitionistic set theory {\bf IZF}. Here we wish to emphasise that they hold in {\bf CZF} as well.

\begin{prop}{pointsincantorspace}
The following statements are equivalent:
\begin{enumerate}
\item[(1)] Formal Cantor space has enough points.
\item[(2)] Cantor space is compact.
\item[(3)] The Fan Theorem: If $S$ is a downwards closed subset of $2^{\lt \N}$ and
\[ (\forall \alpha \in 2^{\N}) \, (\exists u \in \alpha) \, u \in S, \]
then there is a $q \in \mathbb{N}$ such that $< \, >[q] \subseteq S$.
\end{enumerate}
\end{prop}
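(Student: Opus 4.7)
Proof plan: I plan to prove the cycle (1) $\Rightarrow$ (3) $\Rightarrow$ (2) $\Rightarrow$ (1); the three conditions are essentially reformulations of the same finiteness principle for the binary tree, so the whole proof is a careful unpacking of definitions.

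For (1) $\Leftrightarrow$ (3): A sieve $S$ on $\langle\,\rangle$ in formal Cantor space is precisely a downward-closed (in the extension-of-finite-sequences sense) subset of $2^{\lt \N}$. The hypothesis of (3) reads ${\rm ext}(\langle\,\rangle) = 2^\N \subseteq \bigcup_{u \in S} {\rm ext}(u)$, i.e., $S \in {\rm Cov}_{pt}(\langle\,\rangle)$, while the conclusion reads $S \in {\rm Cov}(\langle\,\rangle)$. So the covering part of ``enough points'' at $a = \langle\,\rangle$ is literally (3); the case of arbitrary $a$ reduces to $a = \langle\,\rangle$ by restricting to $\downarrow a$, which is isomorphic to formal Cantor space. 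The order part of ``enough points'', $p \leq_{pt} q \Rightarrow p \leq q$, is automatic here: every $p$ has inhabited ${\rm ext}(p)$ (extend $p$ by zeros), so ${\rm ext}(p) \subseteq {\rm ext}(q)$ forces comparability of $p$ and $q$, and a short length-comparison argument then shows that $q$ must be an initial segment of $p$.

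For (3) $\Leftrightarrow$ (2): Given an open cover $\{U_i\}_{i \in I}$ of $2^\N$, set $S = \{u \in 2^{\lt \N} : (\exists i \in I) \, {\rm ext}(u) \subseteq U_i\}$. This is downward closed, and every point has an initial segment in $S$ since the ${\rm ext}(u)$ form a basis for the topology. Apply (3) to obtain some $q$ with $\langle\,\rangle[q] \subseteq S$; since $\langle\,\rangle[q]$ is finite, finite choice yields the required finite subcover. Conversely, given $S$ as in (3), the family $\{{\rm ext}(u) : u \in S\}$ is an open cover of $2^\N$ by hypothesis, so by (2) finitely many $u_1, \ldots, u_k \in S$ suffice. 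Then $q = \max(|u_1|, \ldots, |u_k|)$ witnesses (3): for any $v \in \langle\,\rangle[q]$, any extension of $v$ to a point $\alpha$ lies in some ${\rm ext}(u_i)$, and since $|u_i| \leq q = |v|$, the sequence $u_i$ is actually an initial segment of $v$, whence $v \in S$ by downward closure.

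Main obstacle: There is no single hard step; the argument is a translation between three ways of expressing one finiteness principle. The subtlest ingredients are (a) the reduction of the cover half of (1) at arbitrary $a$ to the case $a = \langle\,\rangle$ (via the fact that $\downarrow a$ with its induced topology is again formal Cantor space), and (b) verifying that the order half of ``enough points'' comes for free for Cantor space because basic opens have inhabited extensions.
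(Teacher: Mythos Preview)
Your argument is correct, but it is considerably more elaborate than what the paper does. The paper's proof is two lines: it asserts that (2) $\Leftrightarrow$ (3) holds \emph{by definition of compactness} (so in the paper's reading, ``Cantor space is compact'' simply \emph{means} the Fan Theorem statement), and that (1) $\Leftrightarrow$ (3) holds \emph{by definition of having enough points}. No further argument is given.

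By contrast, you interpret (2) as ordinary open-cover compactness of the topological space $2^\N$ and supply a genuine proof of (2) $\Leftrightarrow$ (3) via the basis of cylinder sets; and for (1) $\Leftrightarrow$ (3) you actually verify the two ingredients (covers at arbitrary $a$ reduce to $a = \langle\,\rangle$, and the order half of ``enough points'' is automatic because every basic open has a point). What you gain is an honest argument that works under the standard topological reading of (2), and a check that the order condition in ``enough points'' is not an extra hypothesis for Cantor space. What the paper gains is brevity: once one agrees that compactness of $2^\N$ is \emph{defined} as the Fan Theorem and that ``enough points'' is read at the level of covers, there is literally nothing to prove.
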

\begin{proof}
The equivalence of (2) and (3) holds by definition of compactness and the equivalence of (1) and (3) by the definition of having enough points.
\end{proof}

\begin{prop}{pointsinbairespace}
The following statements are equivalent:
\begin{enumerate}
\item[(1)] Formal Baire space has enough points.
\item[(2)] Monotone Bar Induction: If $S$ is a downwards closed subset of $\N^{\lt \N}$ and
\[ (\forall \alpha \in \N^{\N}) \, (\exists u \in \alpha) \, u \in S \]
and
\[ (\forall u \in \N^{\lt \N}) \big( ((\forall n \in \N) \, u * <n> \in S) \to u \in S\,  \big) \]
hold, then $< \, > \in S$.
\end{enumerate}
\end{prop}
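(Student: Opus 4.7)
The plan is to prove the two directions of the equivalence independently. The implication $(1) \Rightarrow (2)$ is an immediate application of Bar Induction for formal Baire space (\refcoro{formalBIM}); the converse $(2) \Rightarrow (1)$ splits into two halves, the order half handled directly by a two-point argument and the covering half reduced to MBI by a straightforward shifting trick.

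For $(1) \Rightarrow (2)$, suppose $S$ satisfies the hypotheses of Monotone Bar Induction. The condition that every point of $\N^\N$ hits $S$ says exactly that $S \in {\rm Cov}_{pt}(\langle \rangle)$, so by the enough-points assumption $S \in {\rm Cov}(\langle \rangle)$. Applying \refcoro{formalBIM} with $P(u) := u \in S$, the monotonicity hypothesis on $S$ coincides with the induction premise on $P$, and $P$ trivially holds on $S$; we conclude $P(\langle \rangle)$, i.e., $\langle \rangle \in S$.

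For $(2) \Rightarrow (1)$, the implication $p \leq_{pt} q \Rightarrow p \leq q$ does not require MBI: consider the two points $\alpha_0, \alpha_1 \in \N^\N$ obtained by extending $p$ with the constant functions $0$ and $1$, respectively. Both lie in ${\rm ext}(p) \subseteq {\rm ext}(q)$, so $q$ is an initial segment of each; since $\alpha_0$ and $\alpha_1$ differ at every coordinate $\geq |p|$, necessarily $|q| \leq |p|$, whence $q$ is an initial segment of $p$. For the covering implication, let $S$ be a sieve on $a$ with $S \in {\rm Cov}_{pt}(a)$, and set $\bar S = I(\Phi, S)$, which is downward closed by \reflemm{sievesgensieves} and closed under the inductive rule by construction. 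Consider the shifted set
\[ T = \{ u \in \N^{\lt \N} : a * u \in \bar S \}. \]
Then $T$ inherits downward closure and inductiveness from $\bar S$ via the translation $u \mapsto a * u$; moreover, $T$ point-covers $\langle \rangle$, since for any $\alpha : \N \to \N$ the concatenation $a * \alpha$ is a point of ${\rm ext}(a)$, so some initial segment of $a * \alpha$ lies in $S \subseteq \bar S$, and as $S \subseteq \downarrow a$ this segment has the form $a * u'$ with $u'$ an initial segment of $\alpha$, giving $u' \in T$. Applying MBI to $T$ yields $\langle \rangle \in T$, i.e., $a \in \bar S = I(\Phi, S)$, so $S \in {\rm Cov}(a)$.

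The main obstacle is packaging the reduction from an arbitrary basic open $a$ to the root $\langle \rangle$ in the covering part of $(2)\Rightarrow(1)$; once the shift $u \mapsto a * u$ is observed to transfer all three MBI hypotheses from $\bar S$ back to $T$, everything else is routine.
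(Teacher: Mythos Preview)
Your proof is correct and follows the same underlying idea as the paper. For $(1)\Rightarrow(2)$ the two arguments are identical. For $(2)\Rightarrow(1)$ the paper is terser: it only treats the covering condition at the root, applying MBI directly to $\overline{S}=I(\Phi,S)$ for a sieve $S\in{\rm Cov}_{pt}(\langle\,\rangle)$, and does not separately discuss the order condition. Your version is a little more complete in that you (i) verify $p\leq_{pt}q\Rightarrow p\leq q$ explicitly (by the two-point trick, which indeed requires no MBI), and (ii) handle the covering condition at an arbitrary basic open $a$ via the shift $u\mapsto a*u$. The shift is a harmless elaboration of the paper's argument: once one has ${\rm Cov}_{pt}(\langle\,\rangle)\subseteq{\rm Cov}(\langle\,\rangle)$, the general case follows by exactly the translation you wrote down, and the order condition is independent of MBI in any case. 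So the route is the same; you have just filled in the reductions the paper leaves implicit.
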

\begin{proof}
(1) $\Rightarrow$ (2): If $S$ is downwards closed and satisfies $(\forall \alpha \in \N^{\N}) \, (\exists u \in \alpha) \, u \in S$ (i.e., $S$ is a bar), then $S \in {\rm Cov}_{pt}(<>)$, by definition. From the hypothesis that formal formal Baire space has enough points it then follows that $S \in {\rm Cov}(<>)$. Hence, if $S$ is inductive as well (i.e., satisfies $(\forall u \in \N^{\lt \N}) \big( ((\forall n \in \N) \, u * <n> \in S) \to u \in S\,  \big)$), then one may apply Monotone Bar Induction for formal Baire space (\refcoro{formalBIM}) to deduce that $<> \in S$.

(2) $\Rightarrow$ (1): Assume that Monotone Bar Induction holds and suppose that $S \in {\rm Cov}_{pt}(< \, >)$ is arbitrary. We have to show that $S \in {\rm Cov}(< \, >)$. By definition, this means that we have to show that $< \, > \in \overline{S}$, where $\overline{S}$ is inductively defined by the rules:
\begin{displaymath}
\begin{array}{cc}
\frac{ a \in S}{a \in \overline{S}} & \frac{(\forall n \in \N) \, u * <n> \in \overline{S}}{u \in \overline{S}}
\end{array}
\end{displaymath}
(see the construction just before \reflemm{sievesgensieves}). However, since $\overline{S}$ is downwards closed (by \reflemm{sievesgensieves}), a bar (because $S$ is a bar and $S \subseteq \overline{S}$) and inductive (by construction), we may apply Monotone Bar Induction to $\overline{S}$ to deduce that $< \, > \in \overline{S}$, as desired.
\end{proof}

\subsection{Morphisms of formal spaces}

Points are really a special case of morphisms of formal spaces. Here we will assume that the formal spaces we consider are presentable or at least satisfy the additional condition that the collection of basis elements covered by a fixed sieve form a set.

\begin{defi}{morphismofformalspaces}
A \emph{continuous map} or a \emph{morphism of formal spaces} $F: (\mathbb{P}, {\rm Cov}) \to (\mathbb{Q}, {\rm Cov'})$ is a subset $F \subseteq P \times Q$ such that:
\begin{enumerate}
\item[(1)] If $F(p, q)$, $p' \leq p$ and $q \leq q'$, then $F(p', q')$.
\item[(2)] For every $p \in \mathbb{P}$ there is a cover $S \in {\rm Cov}(p)$ such that each $p' \in S$ is related via $F$ to some element $q' \in \mathbb{Q}$.
\item[(3)] For every $q, q' \in \mathbb{Q}$ and element $p \in \mathbb{P}$ such that $F(p, q)$ and $F(p, q')$, there is a cover $S \in {\rm Cov}(p)$ such that every $p' \in S$ is related via $F$ to an element which is smaller than or equal to both $q$ and $q'$.
\item[(4)] Whenever $F(p, q)$ and $T$ covers $q$, there is a sieve $S$ covering $p$, such that every $p' \in S$ is related via $F$ to some $q' \in T$.
\item[(5)] For every $q \in \mathbb{Q}$, the set $\{ p \, : \, F(p, q) \}$ is  closed under the covering relation.
\end{enumerate}
In condition (5) we say that a sieve $S$ is \emph{closed under covering relation} (or simply \emph{closed}), if
\[ R \in {\rm Cov}(a), R \subseteq S \Longrightarrow a \in S. \]
\end{defi}

To help the reader to make sense of this definition, it might be good to recall  some facts from locale theory (see \cite{johnstone82}). A \emph{locale} is a partially ordered class $\mathcal{A}$ which has finite meets and small joins, with the small joins distributing over the finite meets. In addition, a morphism of locales $\mathcal{A} \to \mathcal{B}$ is a map $\mathcal{B} \to \mathcal{A}$ preserving finite meets and small joins.

Every formal space $(\mathbb{P}, {\rm Cov})$ determines a locale ${\rm Idl}(\mathbb{P}, {\rm Cov})$, whose elements are the closed sieves on $\mathbb{P}$, ordered by inclusion. Moreover, every morphism of locales $\varphi: {\rm Idl}(\mathbb{P}, {\rm Cov}) \to {\rm Idl}(\mathbb{Q}, {\rm Cov'})$ determines a relation $F \subseteq P \times Q$ by $p \in \varphi(\overline{q})$, with $\overline{q}$ being the least closed sieve containing $q$. The reader should verify that this relation $F$ has the properties of a map of formal spaces and that every such $F$ determines a unique morphism of locales $\varphi: {\rm Idl}(\mathbb{P}, {\rm Cov}) \to {\rm Idl}(\mathbb{Q}, {\rm Cov'})$.

Together with the continuous maps the class of formal spaces organises itself into a superlarge category, with composition given by composition of relations and identity $I: (\mathbb{P}, {\rm Cov}) \to (\mathbb{P}, {\rm Cov})$ by
\[ I(p, q) \Longleftrightarrow (\exists S \in {\rm Cov}(p)) \, (\forall r \in S) \, r \leq q. \]
(if the formal space is \emph{subcanonical} ($\overline{p} = \downarrow p$ for all $p \in P$),  this simplifies to $I(p, q)$ iff $p \leq q$). Note that in a predicative metatheory, this category cannot be expected to be locally small.

A point of a formal space $(\mathbb{P}, {\rm Cov})$ is really the same thing as a map $1 \to (\mathbb{P}, {\rm Cov})$, where 1 is the one-point space $(\{ * \}, {\rm Cov}')$ with ${\rm Cov'}(*) = \big\{ \{ * \} \big \}$. Indeed, if $F: 1 \to (\mathbb{P}, {\rm Cov})$ is a map, then $\alpha = \{ p \in \mathbb{P} \, : \, F(*, p) \}$ is a point, and, conversely, if $\alpha$ is a point, then
\[ F(*, p) \Leftrightarrow p \in \alpha \]
defines a map. Moreover, these operations are clearly mutually inverse. This implies that any continuous map $F: (\mathbb{P}, {\rm Cov}) \to (\mathbb{Q}, {\rm Cov'})$ induces a function ${\rm pt}(F): {\rm pt}(\mathbb{P}, {\rm Cov}) \to {\rm pt}(\mathbb{Q}, {\rm Cov'})$ (by postcomposition). Since this map is continuous, ${\rm pt}$ defines an endofunctor on the category of those formal spaces on which ${\rm pt}$ is well-defined. 

In addition, we have for any formal space $(\mathbb{P}, {\rm Cov})$ on which ${\rm pt}$ is well-defined a continuous map $F: {\rm pt}(\mathbb{P}, {\rm Cov}) \to (\mathbb{P}, {\rm Cov})$ given by $F(p, q)$ iff ${\rm ext}(p) \subseteq {\rm ext}(q)$. This map $F$ is an isomorphism precisely when $(\mathbb{P}, {\rm Cov})$ has enough points. (In fact, $F$ is the component at $(\mathbb{P}, {\rm Cov})$ of a natural transformation ${\rm pt} \Rightarrow \id$.)

\subsection{The double of a formal space}

Although the Fan Theorem and Monotone Bar Induction are not provable in {\bf CZF}, we will show below that they do hold as derived rules. For that purpose, we use a construction on formal spaces, which is due to Joyal and which we have dubbed the ``double construction''.\footnote{See \cite[Section 15.4]{troelstravandalen88b}.  This construction is known in the impredicative case for locales, but here we wish to emphasise that it works in a predicative setting for formal spaces as well.} This construction will enable us to relate the sheaf semantics over a formal space to external truth (see part 3 of \reflemm{lemmaonforcing}). The best way to explain it is to consider the analogous construction for ordinary topological spaces first.

Starting from a topological space $X$, the double construction takes two disjoint copies of $X$, so that every subset of it can be considered as a pair $(U, V)$ of subsets of $X$. Such a pair will be open, if $U$ is open in $X$ and $U \subseteq V$. Note that we do not require $V$ to be open in $X$: $V$ can be an arbitrary subset of $X$.
Therefore the double construction can be seen as a kind of mapping cylinder with Sierpi\'nski space replacing the unit interval: the ordinary mapping cylinder of a map $f: Y \to X$ is obtained by taking the space $[0,1] \times Y + X$ and then identifying points $(0, y)$ with $f(y)$ (for all $y \in Y$). The double of a space $X$ is obtained from this construction by replacing the unit interval $[0,1]$ by Sierpi\'nski space and considering the canonical map $X_{discr} \to X$.

The construction for formal spaces is now as follows: suppose $({\mathbb U}, {\rm Cov})$ is a formal space whose points form a set $Q$. The set of basic opens of ${\cal D}(\mathbb{U}, {\rm Cov}) = (\mathbb{U}_D, {\rm Cov}_D)$ is
\[ \big\{ D(u) \, : \, u \in \mathbb{U} \big\} + \big\{ \{ q \} \, : q \in Q \big\}. \]
Here both $D(u)$ and $\{ q \}$ are formal symbols for a basic open, representing the pairs $(u, u)$ and $(\emptyset, \{ q \})$ in the topological case. The preorder on $\mathbb{U}_D$ is generated by:
\begin{displaymath}
\begin{array}{cl}
D(v) \leq D(u) & \mbox{if } v \leq u \mbox{ in } \mathbb{U}, \\
\{ q \} \leq D(v) & \mbox{if } v \in q, \\
\{ p \} \leq \{ q \} & \mbox{if } p = q.
\end{array}
\end{displaymath}
In addition, the covering relation is given by
\begin{eqnarray*}
{\rm Cov}_D(D(u)) & = & \big\{ \{ D(v) \, : \, v \in S \} \cup \{ \{q \}  \, : \, v \in q, v \in S \} \, : \, S \in {\rm Cov}(u) \big\}, \\
{\rm Cov}_D( \{ q \}) & = & \big\{ \{ q \} \big\}.
\end{eqnarray*}
\begin{prop}{doublelemma}
${\cal D}(\mathbb{U}, {\rm Cov})$ as defined above is a formal space, which  is presentable, whenever $(\mathbb{U}, {\rm Cov})$ is.
\end{prop}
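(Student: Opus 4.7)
The plan is to verify the three Grothendieck topology axioms for ${\rm Cov}_D$ by a case analysis on the form of the basic open (either $D(u)$ or some $\{q\}$), using throughout properties (1)--(3) of a point of $(\mathbb{U}, {\rm Cov})$. First I observe that the only element of $\mathbb{U}_D$ below $\{q\}$ is $\{q\}$ itself (as the three preorder clauses show), so the only sieve on $\{q\}$ is $\{\{q\}\}$; this makes maximality, stability and local character at $\{q\}$ immediate, and also pins down what the restriction of a cover of $D(u)$ to some $\{q\} \leq D(u)$ must be. That restriction indeed equals $\{\{q\}\}$: $u \in q$ together with $S \in {\rm Cov}(u)$ gives, by property (3) of a point, some $v \in q \cap S$, which forces $\{q\}$ into the restricted cover.

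For the axioms at $D(u)$, I would first check that $\downarrow D(u) = \{D(v) : v \leq u\} \cup \{\{q\} : u \in q\}$, using upward closure of points to show that the second component is as stated; this identifies $M_{D(u)}$ with the cover of $D(u)$ induced by $M_u \in {\rm Cov}(u)$, which gives maximality. For stability at $D(v) \leq D(u)$, the restriction of the cover induced by $S \in {\rm Cov}(u)$ turns out to be the cover induced by $v^*S \in {\rm Cov}(v)$; matching up the $\{q\}$-components uses upward closure \emph{and} downward directedness of points, combined with the fact that $S$ is a sieve. For local character at $D(u)$, given a sieve $S$ and $R \in {\rm Cov}_D(D(u))$ arising from some $R' \in {\rm Cov}(u)$ satisfying the local character hypothesis, I set $S' = \{v : D(v) \in S\}$; the $T_v \in {\rm Cov}(v)$ witnessing $D(v)^*S \in {\rm Cov}_D(D(v))$ is contained in $v^*S'$, so $v^*S' \in {\rm Cov}(v)$, and local character of ${\rm Cov}$ then yields $S' \in {\rm Cov}(u)$. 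The cover of $D(u)$ induced by $S'$ is contained in $S$ (the $\{q\}$-components sit inside $S$ because they lie below some $D(v) \in S$ and $S$ is downward closed), so, interpreting ${\rm Cov}_D$ as closed under supersets of the specified basic forms (the only reading compatible with the topology axioms), $S$ itself lies in ${\rm Cov}_D(D(u))$.

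For presentability, given a presentation ${\rm BCov}$ of $(\mathbb{U}, {\rm Cov})$, I define
\begin{displaymath}
{\rm BCov}_D(D(u)) = \big\{\,\{D(v) : v \in R\} \cup \{\{q\} : v \in q, v \in R\}\,:\,R \in {\rm BCov}(u)\,\big\}
\end{displaymath}
and ${\rm BCov}_D(\{q\}) = \{\{\{q\}\}\}$. Each ${\rm BCov}_D(X)$ is small because ${\rm BCov}(u)$ and the set $Q$ of points are small, and the basis property $S \in {\rm Cov}_D(X) \Leftrightarrow (\exists R \in {\rm BCov}_D(X))\, R \subseteq S$ transports pointwise from $(\mathbb{U}, {\rm Cov})$ via the characterization $T \in {\rm Cov}_D(D(u)) \Leftrightarrow \{v : D(v) \in T\} \in {\rm Cov}(u)$ extracted from the local-character calculation above. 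The main delicate step throughout is stability in the case where a cover of $D(u)$ is restricted to some $\{q\}$ below it, since this is where property (3) of a point is unavoidable; once that is handled the rest is careful but routine bookkeeping.
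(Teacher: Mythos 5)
Your verification is correct and follows the only available route, namely the direct case analysis that the paper explicitly declares a ``routine verification'' left to the reader, together with essentially the same presentation formula (your ${\rm BCov}_D$ adds the $\{q\}$-components and ranges over ${\rm BCov}(u)$ rather than ${\rm Cov}(u)$, which if anything makes smallness more transparent). One small simplification: your ``closed under supersets'' reading of ${\rm Cov}_D$ is unnecessary, since point axiom (3) forces the $\{q\}$-part of every induced cover of $D(u)$ to be all of $\{\{q\} : u \in q\}$, so the sieve produced in your local-character argument is literally equal to the cover induced by $S'$.
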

\begin{proof}
This routine verification we leave to the reader. Note that if BCov is a presentation for  the covering relation Cov, then 
\begin{eqnarray*}
{\rm BCov}_D(D(u)) & = & \big\{ \{ D(v) \, : \, v \in S \} \, : \, S \in {\rm Cov}(u) \big\}, \\
{\rm BCov}_D( \{ q \}) & = & \big\{ \{ q \} \big\}
\end{eqnarray*}
is a presentation for ${\rm Cov}_D$.
\end{proof}

The formal space ${\cal D}(\mathbb{U}, {\rm Cov})$ comes equipped with three continuous maps. First of all, there is a closed map $\mu: (\mathbb{U}, {\rm Cov}) \to {\cal D}(\mathbb{U}, {\rm Cov})$ given by $\mu(u, p)$ iff $p = D(v)$ for some $v \in \mathbb{U}$ with $I(u, v)$. In addition, there is a map $\pi: {\cal D}(\mathbb{U}, {\rm Cov}) \to (\mathbb{U}, {\rm Cov})$ given by $\pi(p, u)$ iff there is a $v \in \mathbb{U}$ with $u = D(v)$ and $I(v, u)$. Note that $\pi \circ \mu = \id$. And, finally, there is an open map of the form $\nu: (\mathbb{U}, {\rm Cov})_{discr} \to {\cal D}(\mathbb{U}, {\rm Cov})$. The domain of this map $(\mathbb{U}, {\rm Cov})_{discr}$ is the formal space whose basic opens are singletons $\{ q \}$ (with the discrete ordering) and whose only covering sieves are the maximal ones. The map $\nu$ is then given by $\nu(\{ q \}, u)$ iff $u = \{ q \}$. We depict these maps in the following diagram:
\diag{ (\mathbb{U}, {\rm Cov}) \ar[rr]^{\mu} & & {\cal D}(\mathbb{U}, {\rm Cov}) \ar[d]^{\pi} & & (\mathbb{U}, {\rm Cov})_{discr} \ar[ll]_{\nu} \\
& & (\mathbb{U}, {\rm Cov}). }

\section{Sheaf models}

In \cite{gambino02} and \cite{bergmoerdijk10b} it is shown how sheaves over a presentable formal space give rise to a model of {\bf CZF}. Moreover, since this fact is provable within {\bf CZF} itself, sheaf models can be used to establish proof-theoretic facts about {\bf CZF}, such as derived rules. We will exploit this fact to prove Derived Fan and Bar Induction rules for (extensions of) {\bf CZF}.

\subsection{Basic properties of sheaf semantics}

We recapitulate the most important facts about sheaf models below. We hope this allows the reader who is not familiar with sheaf models to gain the necessary informal understanding to make sense of the proofs in this section. The reader who wants to know more or wishes to see some proofs, should consult \cite{gambino02} and \cite{bergmoerdijk10b}.

A \emph{presheaf} $X$ over a preorder $\mathbb{P}$ is a functor $X: \mathbb{P}^{op} \to \Sets$. This means that $X$ is given by a family of sets $X(p)$, indexed by elements $p \in \mathbb{P}$, and a family of restriction operations $- \upharpoonright q: X(p) \to X(q)$ for $q \leq p$, satisfying:
\begin{enumerate}
\item $- \upharpoonright p: X(p) \to X(p)$ is the identity,
\item for every $x \in X(p)$ and $r \leq q \leq p$, $(x \upharpoonright q) \upharpoonright r = x \upharpoonright r$.
\end{enumerate}
Given a topology Cov on $\mathbb{P}$, a presheaf $X$ will be called a \emph{sheaf}, if it satisfies the following condition:
\begin{quote}
For any given sieve $S \in {\rm Cov}(p)$ and family $\{ x_q \in X(q) \, : \, q \in S \}$, which is compatible, meaning that $(x_q) \upharpoonright r = x_r$ for every $r \leq q \in S$, there is a unique $x \in X(p)$ (the ``amalgamation'' of the compatible family) such that $x \upharpoonright q = x_q$ for all $q \in S$.
\end{quote}

\begin{lemm}{shaxiomredtocovsystem}
If a formal space $(\mathbb{P}, {\rm Cov})$ is generated by a covering system $C$, then it suffices to check the sheaf axiom for those families which belong to the covering system.
\end{lemm}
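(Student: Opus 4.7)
The plan is to verify the sheaf condition for arbitrary covering sieves by applying the induction principle of \reftheo{indoncovers}. Let $X$ be a presheaf satisfying the sheaf axiom for every generating family $\alpha \in C(p)$, fix a sieve $S \in {\rm Cov}(a)$ and a compatible family $\{x_q\}_{q \in S}$, and consider the property
\[ P(p)\ \equiv\ \text{for every } q \leq p,\ \{x_r : r \in q^*S\}\ \text{has a unique amalgamation in } X(q). \]
Verification of $P(y)$ for $y \in S$ is immediate: for $q \leq y \in S$ one has $q \in q^*S$, so $x_q$ itself amalgamates, with uniqueness obtained by restricting any candidate to $q$. The conclusion $P(a)$ delivered by \reftheo{indoncovers} then supplies the required amalgamation for $S = a^*S$ at $a$.

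The main work lies in the inductive clause $\forall \alpha \in C(p)\,\bigl((\forall y \in \alpha)\,P(y) \to P(p)\bigr)$. Fixing $\alpha \in C(p)$ with $P(y)$ for all $y \in \alpha$, and fixing $q \leq p$, the covering axiom of \refdefi{coveringsystem} yields $\beta \in C(q)$ with $\beta \subseteq q^*(\downarrow \alpha)$. For each $b \in \beta$ one picks any $y \in \alpha$ with $b \leq y$ and uses $P(y)$ at $b$ to produce the unique amalgamation $w_b \in X(b)$ of $\{x_r : r \in b^*S\}$. Pairwise compatibility of $\{w_b\}_{b \in \beta}$ will follow from the uniqueness clause of $P(y)$ at any shared lower bound, so the hypothesised sheaf axiom for $\beta \in C(q)$ produces a unique $w \in X(q)$ amalgamating the $w_b$'s.

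To see that this $w$ amalgamates $\{x_r : r \in q^*S\}$, fix $r \in q^*S$ and apply the covering axiom once more at $r \leq q$ to obtain $\gamma \in C(r)$ refining $r^*(\downarrow \beta)$. For each $c \in \gamma$, picking $b \in \beta$ with $c \leq b$ gives $w \upharpoonright c = w_b \upharpoonright c = x_c$ (since $c \leq r \in S$ puts $c$ in $b^*S$), while compatibility of the original family gives $x_r \upharpoonright c = x_c$. The sheaf condition at $\gamma \in C(r)$ then forces $w \upharpoonright r = x_r$. Uniqueness of $w$ at $q$ reduces in the same way, first to uniqueness on each $b^*S$ via $P(y)$ and then to uniqueness on $\beta$ via the hypothesis.

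The principal obstacle is exactly the gap the lemma is designed to close: the hypothesis only yields amalgamations along generating covers $\alpha \in C(p)$, while the conclusion demands them along \emph{arbitrary} covering sieves in ${\rm Cov}$. The covering axiom of \refdefi{coveringsystem} is exactly the tool for refining an arbitrary cover by a generating one, and the proof deploys it twice per inductive step---once to refine $\alpha$ below $q$, and once more to refine $\beta$ below $r$---while exploiting the uniqueness clause built into $P$ to propagate coherence from generating covers to arbitrary ones.
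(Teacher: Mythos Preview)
Your proof is correct and follows essentially the same inductive strategy as the paper. The paper packages the induction differently: rather than fixing a single sieve $S$ and family and invoking \reftheo{indoncovers} with your predicate $P$, it defines
\[
S \in {\rm Cov}^*(a) \ \Longleftrightarrow\ \text{for all } b \leq a \text{ and all compatible families on } b^*S,\ \text{a unique amalgamation exists in } X(b),
\]
checks that ${\rm Cov}^*$ is a Grothendieck topology containing $C$, and concludes ${\rm Cov} \subseteq {\rm Cov}^*$ by minimality (\reftheo{coveringsystem}). The content is the same---your inductive step, with its two applications of the covering axiom and its use of uniqueness to propagate compatibility, is exactly what is hidden in the paper's unverified claim that ${\rm Cov}^*$ satisfies local character and contains the generators. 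Your version has the virtue of making these details explicit; the paper's has the virtue of treating all families at once rather than one at a time.
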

\begin{proof}
Suppose $X$ is a presheaf satisfying the sheaf axiom with respect to the covering system $C$, in the following sense:
\begin{quote}
For any given element $\alpha \in C(a)$ and family $\{ x_q \in X(q) \, : \, q \in \alpha \}$, which is compatible, meaning that for all $r \leq p, q$ with $p, q \in \alpha$ we have $(x_p) \upharpoonright r = (x_q) \upharpoonright r$, there exists a unique $x \in X(a)$ such that $x \upharpoonright q = x_q$ for all $q \in \alpha$.
\end{quote}
Define ${\rm Cov}^*$ by:
\begin{eqnarray*}
S \in {\rm Cov}^*(a) & \Leftrightarrow & \mbox{if } b \leq a \mbox{ and } \{ x_q \in X(q) \, : \, q \in b^*S \} \mbox{ is a compatible family,} \\
& & \mbox{then it can be amalgamated to a unique } x \in X(b).
\end{eqnarray*}
${\rm Cov}^*$ is a Grothendieck topology, which, by assumption, satisfies
\[ \alpha \in C(a) \Longrightarrow \downarrow \alpha \in {\rm Cov}^*(a). \]
Therefore ${\rm Cov} \subseteq {\rm Cov}^*$, which implies that $X$ is a sheaf with respect to the Grothendieck topology ${\rm Cov}$.
\end{proof}

A morphism of presheaves $F: X \to Y$ is a natural transformation, meaning that it consists of functions $\{ F_p: X(p) \to Y(p) \, : \, p \in \mathbb{P} \}$ such that for all $q \leq p$ we have a commuting square:
\diag{ X(p) \ar[r]^{F_p} \ar[d]_{- \upharpoonright q} & Y(p) \ar[d]^{- \upharpoonright q} \\
X(q) \ar[r]_{F_q} & Y(q). }
The category of sheaves is a full subcategory of the category of presheaves, so every natural transformation $F: X \to Y$ between sheaves $X$ and $Y$ is regarded as a morphism of sheaves.

The category of sheaves is a Heyting category and therefore has an ``internal logic''. This internal logic can be seen as a a generalisation of forcing, in that truth in the model can be explained using a binary relation between elements $p \in \mathbb{P}$ (the ``conditions'' in forcing speak) and first-order formulas. This forcing relation is inductively defined as follows:
\begin{eqnarray*}
p \forces \varphi \land \psi & \Leftrightarrow & p \forces \varphi \mbox{ and } p \forces \psi \\
p \forces \varphi \lor \psi & \Leftrightarrow & \{ q \leq p \, : \, q \forces \varphi \mbox{ or } q  \forces \psi  \} \in {\rm Cov}(p) \\
p \forces \varphi \to \psi & \Leftrightarrow & (\forall q \leq p) \, q \forces \varphi \Rightarrow q \forces \psi \\
p \forces \bot & \Leftrightarrow & \emptyset \in {\rm Cov}(p)  \\
p \forces (\exists x : X) \, \varphi(x) & \Leftrightarrow & \{ q \leq p \, : \, (\exists x \in X(q)) \, q \forces \varphi(x) \} \in {\rm Cov}(p) \\
p \forces (\forall x : X) \, \varphi(x) & \Leftrightarrow & (\forall q \leq p) \, (\forall x \in X(q)) \, q \forces \varphi(x)
\end{eqnarray*}

\begin{lemm}{lemmaonforcing}
Sheaf semantics has the following properties:
\begin{enumerate}
\item (Monotonicity) If $p \forces \varphi$ and $q \leq p$, then $q \forces \varphi$.
\item (Local character) If $S$ covers $p$ and $q \forces \varphi$ for all $q \in S$, then $p \forces \varphi$.
\item If $p$ is minimal (so $q \leq p$ implies $q = p$) and ${\rm Cov}(p) = \big\{ \{ p \} \big \}$, then forcing at $p$ coincides with truth, i.e., we have $\varphi$ iff $p \forces \varphi$.
\end{enumerate}
\end{lemm}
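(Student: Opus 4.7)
The plan is to prove all three parts by induction on the structure of the formula $\varphi$, relying on the Grothendieck topology axioms (Maximality, Stability, Local Character) recorded in \refdefi{formalspace}. The atomic cases are part of the construction of sheaf semantics for \textbf{CZF} as set out in \cite{gambino02, bergmoerdijk10b}, so I would simply invoke that these base cases satisfy both monotonicity and local character, and then focus the inductive argument on the propositional and quantifier clauses listed above the lemma.

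For part (1), the cases $\land$, $\to$, $\forall$ are immediate from the inductive hypothesis because their definitions already quantify universally over elements below $p$. For $\lor$, $\exists$, and $\bot$, I would apply the Stability axiom: given a witnessing sieve $R \in {\rm Cov}(p)$ and $q \leq p$, the restriction $q^*R = R \cap \, \downarrow q$ lies in ${\rm Cov}(q)$ and is contained in the analogous sieve at $q$ required by the forcing clause, so monotonicity of covers (i.e. the fact that a cover is preserved under upward enlargement of the sieve, which follows by combining Stability and Local Character) finishes the case.

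For part (2), I would proceed similarly. The cases $\lor$, $\exists$, $\bot$ reduce directly to the Local Character axiom of the topology: each $q \in S$ supplies a sieve $R_q \in {\rm Cov}(q)$ witnessing the clause at $q$, and the union $\bigcup_{q \in S} R_q$ then witnesses it at $p$ because $S \in {\rm Cov}(p)$ and local character of the topology glues the covers. The subtler cases are $\to$ and $\forall$: for $\to$, take $r \leq p$ with $r \forces \varphi$, consider $r^*S \in {\rm Cov}(r)$ by Stability, apply part (1) to transport $r \forces \varphi$ down to each $s \in r^*S$, use the hypothesis at $s \leq q \in S$ to conclude $s \forces \psi$, and then invoke the inductive hypothesis (local character for $\psi$) to deduce $r \forces \psi$. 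The case $\forall$ is analogous.

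For part (3), assume $p$ is minimal and ${\rm Cov}(p) = \{\{p\}\}$. The key observation is that under these conditions, each forcing clause collapses to its Tarskian analogue: the only $q \leq p$ is $p$ itself, so the clauses for $\to$ and $\forall$ become literal implications and universal quantifications; the only sieve in ${\rm Cov}(p)$ is $\{p\}$, which is never empty, so $p \not\forces \bot$ and the clauses for $\lor$ and $\exists$ reduce to disjunction and existence of a witness in $X(p)$ respectively. An induction on $\varphi$ then matches $p \forces \varphi$ with the external truth of $\varphi$ (interpreted in the stalks $X(p)$). The main obstacle in this part is not the propositional bookkeeping but ensuring the atomic base cases of the \textbf{CZF} sheaf interpretation (membership and equality of the V-sets built from sheaves) also collapse to their classical meaning at such a $p$; this, however, is built into the construction of \cite{gambino02, bergmoerdijk10b}, where the interpretations of $=$ and $\in$ are themselves defined via the forcing clauses above and thus inherit the collapse automatically.
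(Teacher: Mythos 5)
Your proposal is correct and is precisely the argument the paper compresses into its one-line proof (``By induction on the structure of $\varphi$''): stability handles monotonicity for the $\lor$, $\exists$, $\bot$ clauses, the local character axiom of the topology handles part (2), and the collapse of every clause at a minimal element with ${\rm Cov}(p)=\{\{p\}\}$ gives part (3), with the atomic cases deferred to the construction of the model as the paper also does. One trivial quibble: the fact that a sieve containing a covering sieve is itself covering follows from \emph{Maximality} plus Local character (for $b$ in the smaller cover, $b^*S$ is the maximal sieve $M_b$), not from Stability plus Local character as you state.
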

\begin{proof}
By induction on the structure of $\varphi$.
\end{proof}

Note, in connection with Section 3.6, that every element of the form $\{ q \}$ in the double forms a minimal element to which the hypothesis of part 3 of \reflemm{lemmaonforcing} applies.

Using this forcing relation, one defines truth in the model as being forced by every condition $p \in \mathbb{P}$. If $\mathbb{P}$ has a top element 1, this coincides with being forced at this element (by monotonicity).

One way to see sheaf semantics is as a generalisation of forcing for classical set theory, which one retrieves by putting:
\[ S \in {\rm Cov}(p) \Leftrightarrow S \mbox{ is dense below } p. \]
Forcing for this specific forcing relation validates classical logic, but in general sheaf semantics will only validate intuitionistic logic.

Sheaf semantics as described above is a way of interpreting first-order theories in a category of sheaves over $(\mathbb{P}, {\rm Cov})$. To obtain a semantics for the language of set theory, one uses the machinery of algebraic set theory and proceeds as follows (see \cite{moerdijkpalmgren02,bergmoerdijk08,bergmoerdijk09}). Let $\pi: E \to U$ be the universal small map in the category of sheaves and let $V = W(\pi)/\sim$ be the extensional (Mostowski) collapse of the W-type $W = W(\pi)$. Like any W-type, $W$ comes equipped with a relation $M$ generated by
\[ t(e) \, M \, {\rm sup}_u(t) \]
for any $e \in E_u$ and $t: E_u \to W$. This relation $M$ descends to a well-defined relation on $V$, which interprets the membership symbol in the language of set theory and will be denoted by $\epsilon$. For the resulting model $(V, \epsilon)$ we have:
\begin{theo}{sheavessoundforCZF}
If $(\mathbb{P}, {\rm Cov})$ is a presentable formal space, then sheaf semantics over $(\mathbb{P},  {\rm Cov})$ is sound for {\bf CZF}, as it is for   {\bf CZF} extended with small W-types {\bf WS} and the axiom of multiple choice {\bf AMC}. Moreover, the former is provable within {\bf CZF}, while the latter is provable in {\bf CZF + WS + AMC}.
\end{theo}
\begin{proof}
This is proved in \cite{bergmoerdijk10b, moerdijkpalmgren02} for the general case of sheaves over a site. For the specific case of sheaves on a formal space and {\bf CZF} alone, this was proved earlier by Gambino in terms of Heyting-valued models  \cite{gambino02,gambino06}.
\end{proof}

\begin{rema}{presneeded}
The requirement that $(\mathbb{P}, {\rm Cov})$ has a presentation is essential: the theorem is false without it (see \cite{gambino06}). Therefore we will assume from now on that $(\mathbb{P}, {\rm Cov})$ is presentable.
\end{rema}

For the proofs below we need to compute various objects related to Cantor space and Baire space in different categories of sheaves. We will discuss the construction of $\N$ in sheaves in some detail: this will hopefully give the reader sufficiently many hints to see why the formulas we give for the others are correct. 

To compute $\NN$ in sheaves, one first computes $\NN$ in presheaves, where it is pointwise constant $\NN$. The corresponding object in sheaves is obtained by sheafifying this object, which means by twice applying the plus-construction (the standard treatment as in \cite{maclanemoerdijk92} can also be followed in {\bf CZF}). In case every covering sieve is inhabited, the presheaf $\N$ is already separated, so then it suffices to apply the plus-construction only once. In that case, we obtain:
\begin{eqnarray*}
\NN(p) & = & \{ (S, \varphi) \, : \, S \in {\rm Cov}(p), \varphi: S \to \NN \mbox{ compatible} \} / \sim,
\end{eqnarray*}
with $(S, \varphi) \sim (T, \psi)$, if there is an $R \in {\rm Cov}(p)$ with $R \subseteq S \cap T$ and $\varphi(r) = \psi(r)$ for all $r \in R$, and $(S, \varphi) \upharpoonright q = (q^*S, \varphi \restriction q^*S)$.

\begin{rema}{naturalnumbersandcontinuousmaps}
If $\mathbb{P}$ has a top element 1 (as often is the case), then elements of $\N(1)$ correspond to continuous functions
\[ (\mathbb{P}, {\rm Cov}) \to \N_{discr}. \]
\end{rema}

\begin{rema}{pureelements}
Borrowing terminology from Boolean-valued models \cite{bell85}, we could call elements of $\NN(p)$ of the form $(M_p, \varphi)$ \emph{pure} and others \emph{mixed} (recall that $M_p = \downarrow p$ is the maximal sieve on $p$). As one sees from the description of $\NN$ in sheaves, the pure elements lie dense in this object, meaning that for every $x \in \NN(p)$,
\[ \{ q \leq p \, : \, x \upharpoonright q \mbox{ is pure} \} \in {\rm Cov}(p). \]
This, together with the local character of sheaf semantics, has the useful consequence that in the clauses for the quantifiers
\begin{eqnarray*}
p \forces (\exists x \in \NN) \, \varphi(x) & \Leftrightarrow & \{ q \leq p \, : \, (\exists x \in \NN(q)) \, q \forces \varphi(x) \} \in {\rm Cov}(p) \\
p \forces (\forall x \in \NN) \, \varphi(x) & \Leftrightarrow & (\forall q \leq p) \, (\forall x \in \NN(q)) \, q \forces \varphi(x)
\end{eqnarray*}
one may restrict one's attention to those $x \in \NN(q)$ that are pure.
\end{rema}

We also have the following useful formulas:
\begin{eqnarray*}
2(p) & = & \{ (S, \varphi) \, : \, S \in {\rm Cov}(p), \varphi: S \to \{ 0, 1 \} \mbox{ compatible} \} / \sim,\\
2^{\lt \NN}(p) & = &  2(p)^{\lt \NN}, \\
2^\NN(p) & = & 2(p)^\NN, \\
\NN^{\lt \NN}(p) & = &  \NN(p)^{\lt \NN}, \\
\NN^\NN(p) & = & \NN(p)^\NN.
\end{eqnarray*}
All these objects come equipped with the obvious equivalence relations and restriction operations. We will not show the correctness of these formulas, which relies heavily on the following fact:
\begin{prop}{sheavesexpideal} {\rm \cite[Proposition III.1, p.~136]{maclanemoerdijk92}}
The sheaves form an exponential ideal in the category of presheaves, so if $X$ is a sheaf and $Y$ is a presheaf, then $X^Y$ (as computed in presheaves) is a sheaf.
\end{prop}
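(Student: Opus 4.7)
The plan is to use the standard description of the presheaf exponential together with the sheaf property of $X$ applied to restrictions of the given covering sieve. Concretely, for presheaves over a preorder one has
\[ X^Y(p) \;=\; \mathrm{Nat}(\mathbf{y}(p) \times Y, X), \]
where $\mathbf{y}(p)$ is the representable presheaf. Unpacking this in our preorder setting, an element $\alpha \in X^Y(p)$ is a family of functions $\alpha_q : Y(q) \to X(q)$, one for each $q \leq p$, that commutes with restriction in $Y$ and $X$. The restriction $\alpha \upharpoonright r$ for $r \leq p$ is simply the family $(\alpha_q)_{q \leq r}$.

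The sheaf axiom I need to verify is the following. Fix $p \in \mathbb{P}$, a sieve $S \in {\rm Cov}(p)$, and a compatible family $\{\alpha^r \in X^Y(r) : r \in S\}$, meaning $\alpha^r \upharpoonright s = \alpha^s$ whenever $s \leq r$ in $S$, which in turn means $\alpha^r_s = \alpha^s_s$ for such $s$. I must produce a unique $\alpha \in X^Y(p)$ with $\alpha \upharpoonright r = \alpha^r$ for each $r \in S$. The construction is the natural one: given $q \leq p$ and $y \in Y(q)$, consider the sieve $q^*S = S \cap \downarrow q$, which covers $q$ by stability of ${\rm Cov}$. For each $s \in q^*S$ set $x_s := \alpha^s_s(y \upharpoonright s) \in X(s)$. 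Compatibility of the family $\{\alpha^r\}$ together with naturality of each $\alpha^r$ implies that the family $\{x_s\}_{s \in q^*S}$ is compatible in $X$: for $t \leq s$ in $q^*S$ one has $x_s \upharpoonright t = \alpha^s_t(y \upharpoonright t) = \alpha^t_t(y \upharpoonright t) = x_t$. Since $X$ is a sheaf, this compatible family has a unique amalgamation in $X(q)$, which we declare to be $\alpha_q(y)$.

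The remaining verifications are routine diagram chases using the uniqueness part of the sheaf axiom for $X$. I would check in order: (i) that $\alpha_q$ is natural, i.e.\ commutes with restriction in $Y$ and $X$, by showing both possible values amalgamate the same compatible family over a suitable cover; (ii) that $\alpha \upharpoonright r = \alpha^r$ for every $r \in S$, which holds because when $q \leq r \in S$ the original family $\{\alpha^s_s(y \upharpoonright s) : s \in q^*S\}$ contains the single element $\alpha^r_q(y) = \alpha^q_q(y)$ at the top (and every other term is its restriction), so the amalgamation is forced to equal $\alpha^r_q(y)$; (iii) uniqueness of $\alpha$, which again follows because any other amalgamation would have to satisfy $\alpha'_q(y) \upharpoonright s = \alpha^s_s(y \upharpoonright s)$ for all $s \in q^*S$, and $X$'s sheaf condition then pins $\alpha'_q(y)$ down.

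The only mild obstacle is purely bookkeeping: keeping straight the two indices in $\alpha^r_s$ (the object $r$ where the natural transformation sits, versus the level $s \leq r$ at which it acts) and checking that compatibility at the outer level $r$ gives compatibility at the inner level $s$. Once that notational convention is fixed, every step reduces to invoking existence and uniqueness of amalgamations in $X$.
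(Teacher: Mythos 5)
Your argument is correct. Note that the paper itself gives no proof of this proposition at all --- it is quoted verbatim from Mac Lane--Moerdijk, where the standard argument is the slick adjunction one: $X^Y$ is a sheaf iff restriction along each covering sieve inclusion $S\hookrightarrow \mathbf{y}(p)$ induces a bijection on maps into $X^Y$, which by the exponential adjunction is restriction along $S\times Y\hookrightarrow \mathbf{y}(p)\times Y$, and the latter is again a dense (covering) monomorphism, so the sheaf condition for $X$ applies. Your elementwise construction is exactly the unwinding of that argument over a preorder: the family $\{\alpha^s_s(y\upharpoonright s)\}_{s\in q^*S}$ is the transpose of the compatible family restricted along $q^*S\times Y$, and its amalgamation in $X(q)$ is your $\alpha_q(y)$. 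The details check out --- in particular the key compatibility computation $x_s\upharpoonright t=\alpha^s_t(y\upharpoonright t)=\alpha^t_t(y\upharpoonright t)=x_t$, the observation that for $q\leq r\in S$ the sieve $q^*S$ contains $q$ itself as top element (forcing the amalgamation to be $\alpha^r_q(y)$), and the uniqueness step via separatedness of $X$. If anything, the concrete verification is the more appropriate one in the paper's predicative setting, since it invokes only the elementary sheaf axiom and no topos-theoretic machinery; the one point worth making explicit in a full write-up is the identification $X^Y(p)\cong\mathrm{Nat}(\mathbf{y}(p)\times Y,X)$ itself (Yoneda), which you correctly take as the definition of the presheaf exponential.
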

From these formulas one sees that, if $\mathbb{P}$ has a top element 1, then $2^\N(1)$ can be identified with the set of continuous functions $(\mathbb{P}, {\rm Cov}) \to \mathbf{C}$ to formal Cantor space and $\N^\N(1)$ with the set of continuous functions $(\mathbb{P}, {\rm Cov}) \to \mathbf{B}$ to formal Baire space. Also, in $2^{\lt \N}$ and $\N^{\lt \N}$ the ``pure'' elements are again dense. (But this is not true for $2^\N$ and $\N^\N$, in general.)

\subsection{Choice principles}

For our purposes it will be convenient to introduce the following \emph{ad hoc} terminology.

\begin{defi}{CCspace}
A formal space $(\mathbb{P}, {\rm Cov})$ will be called a \emph{CC-space}, if every cover has a countable, disjoint refinement. This means that for every $S \in {\rm Cov}(p)$, there is a countable $\alpha \subseteq S$ such that  $\downarrow \alpha \in {\rm Cov}(p)$ and for all $p, q \in \alpha$, either $p = q$ or $\downarrow p \, \cap \downarrow q = \emptyset$.
\end{defi}

\begin{exam}{examoCCspaces}
Formal Cantor space is a CC-space and if ${\bf AC}_\omega$ holds, then so is formal Baire space (see \refprop{BSassiteCAC}). Also, doubles of CC-spaces are again CC.
\end{exam}

Our main reason for introducing the notion of a CC-space is the following proposition, which is folklore (see, for instance, \cite{grayson83}):
\begin{prop}{preservationofchoice}
Suppose $(\mathbb{P}, {\rm Cov})$ is a presentable formal space which is CC. If ${\bf DC}$ or ${\bf AC}_\omega$ holds in the metatheory, then the same choice principle holds in ${\rm Sh}(\mathbb{P}, {\rm Cov})$. Moreover, this fact is provable in {\bf CZF}.
\end{prop}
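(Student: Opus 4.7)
The plan is to verify the forcing relation directly. Fix a sheaf $X$ and a condition $p \in \mathbb{P}$, and suppose $p \forces (\forall n \in \N)(\exists x \in X)\,\varphi(n, x)$; I aim to produce $f \in X^\N(p)$ witnessing $p \forces (\exists f : \N \to X)(\forall n \in \N)\,\varphi(n, f(n))$. By \refrema{pureelements}, pure elements are dense in $\N$, so unfolding the forcing clauses for $\forall$ and $\exists$ yields, for every $n \in \N$ in the metatheory, a cover $S_n \in {\rm Cov}(p)$ such that for each $r \in S_n$ some $x \in X(r)$ satisfies $r \forces \varphi(n, x)$.

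The next step is to invoke the CC property to replace each $S_n$ by a countable disjoint refinement $\alpha_n = \{r^n_i : i \in I_n\}$; this itself already requires one round of $\mathbf{AC}_\omega$, indexed over $n$. A second application of $\mathbf{AC}_\omega$ in the metatheory then selects simultaneously witnesses $x^n_i \in X(r^n_i)$ with $r^n_i \forces \varphi(n, x^n_i)$, the index set $\{(n, i) : n \in \N,\, i \in I_n\}$ being a countable union of countable sets. For each fixed $n$, the family $\{x^n_i : i \in I_n\}$ is vacuously compatible because $\downarrow r^n_i \cap \downarrow r^n_j = \emptyset$ for $i \neq j$, so the sheaf condition amalgamates it to a unique $f_n \in X(p)$ with $f_n \upharpoonright r^n_i = x^n_i$; local character (\reflemm{lemmaonforcing}) then yields $p \forces \varphi(n, f_n)$.

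It remains to package the family $(f_n)_{n \in \N}$ into an element of $X^\N(p)$. Since $\N$ in sheaves is the sheafification of the constant presheaf $\N$ and $X$ is a sheaf, a morphism $\N \to X$ at stage $p$ corresponds to a compatible family of maps $g_q : \N \to X(q)$ for $q \leq p$; setting $g_q(n) = f_n \upharpoonright q$ satisfies compatibility trivially and defines the required $f$, with $f(n) = f_n$ at stage $p$. The condition $p \forces (\forall n \in \N)\,\varphi(n, f(n))$ then reduces, via density of pure elements and monotonicity, to the already-established $p \forces \varphi(n, f_n)$ for each metatheoretic $n$.

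The $\mathbf{DC}$ case is analogous but iterative: starting from $x_0 \in X(p)$, one inductively produces a tree of countable disjoint covers, with chosen witnesses along each branch satisfying $\varphi$ in the successor direction, using $\mathbf{DC}$ in the metatheory to generate the infinite sequence of choices, and amalgamates as before. The main obstacle I anticipate is purely combinatorial: the CC hypothesis is what trivialises amalgamation by removing any need to verify nontrivial compatibility, so no further appeal to choice is required when assembling the internal function. Since every step of the construction is formalisable in $\mathbf{CZF}$ augmented with the appropriate choice principle, the ``provable in $\mathbf{CZF}$'' clause of the proposition follows by inspection.
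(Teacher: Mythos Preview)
Your proof is correct and follows essentially the same approach as the paper's: unfold forcing via density of pure naturals, use the CC-hypothesis to get countable disjoint refinements, apply ${\bf AC}_\omega$ to choose witnesses, and amalgamate via the sheaf condition (disjointness making compatibility trivial). You are merely more explicit than the paper in separating the two uses of ${\bf AC}_\omega$ (choosing the refinements $\alpha_n$, then the witnesses $x^n_i$) and in packaging the resulting family into an element of $X^\N(p)$.
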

\begin{proof}
We check this for ${\bf AC_\omega}$, the argument for ${\bf DC}$ being very similar. So suppose $X$ is some sheaf and
\[ p \forces (\forall n \in \NN) (\exists x \in X) \, \varphi(n, x). \]
Using that the pure elements in $\NN$ are dense (\refrema{pureelements}), this means that for every $n \in \NN$ there is a cover $S \in {\rm Cov}(p)$ such that for all $q \in S$ there is an $x \in X(q)$ such that
\[ q \forces \varphi(n, x). \]
Because the space is assumed to be CC we have $S = \downarrow \alpha$ for a set $\alpha$ which is countable and disjoint. Furthermore, since ${\bf AC}_\omega$ holds, the $x \in X(q)$ can be chosen as a function of $n \in \NN$ and $q \in \alpha$. As $\alpha$ is disjoint, we can therefore amalgamate the $x_{q, n} \in X(q)$ to an element $x_n \in X(p)$ such that
\[ p \forces \varphi(n, x_n). \]
So if we set $f(n) = x_n$ we obtain the desired result.
\end{proof}

\section{Main results}

In this final section we present the main results of this paper: the validity of various derived rules for {\bf CZF} and its extension of the form ${\bf CZF}^+$. A system of a slightly different kind to which these results apply as well will be discussed in Appendix B. The proofs are based on the fact that an appropriate predicative formulation of sheaf semantics can be proved inside {\bf CZF} to be sound for {\bf CZF}, together with the special features of the double construction mentioned after \reflemm{lemmaonforcing}.

\begin{theo}{derivedfanrule} {\rm (Derived Fan Rule)}
Suppose $\varphi(x)$ is a definable property of elements $u \in 2^{\lt \NN}$. If
\begin{eqnarray*}
{\bf CZF} & \vdash & (\forall \alpha \in 2^\NN) \, (\exists u \in 2^{\lt \NN}) \, (\alpha \in u \land \varphi(u)) \mbox{ and } \\
{\bf CZF} & \vdash &  (\forall u \in 2^{\lt \NN}) \, (\forall v \in 2^{\lt \NN}) \, (v \leq u \land \varphi(u) \to \varphi(v)),
\end{eqnarray*}
then ${\bf CZF} \vdash (\exists n \in \NN) \, (\forall v \in < \, >[n]) \, \varphi(v)$.
\end{theo}
\begin{proof}
We work in {\bf CZF}. We pass to sheaves over the double of formal Cantor space ${\cal D}(\mathbf{C})$, where there is a global section $\pi$ of the exponential sheaf $2^{\N}$ defined by letting $\pi(n)$ be the equivalence class of
\[  \big( \, < \, >[n], \lambda x \in < \, >[n]. x(n) \, \big) .\]
Under the correspondence between such global sections with continuous functions ${\cal D}(\mathbf{C}) \to \mathbf{C}$, this is precisely the map $\pi$ from Section 3.7 (second map in the list).

From
\[ {\rm Sh}({\cal D}(\mathbf{C})) \models  (\forall \alpha \in 2^\NN) \, (\exists u \in 2^{\lt \NN}) \, (\alpha \in u \land \varphi(u)), \]
it follows that
\[ D(< \, >) \forces  (\exists u \in 2^{\lt \NN}) \, (\pi \in u \land \varphi(u)). \]
Sheaf semantics then gives one a natural number $n$ such that for every $v \in < \, >[n]$ there is a section $\tau_v \in 2^{\lt \NN}(D(v))$ such that
\[ D(v) \forces  \pi \in \tau_v \land \varphi(\tau_v). \]
By choosing a larger $n$ if necessary, one may achieve that the $\tau_v$ are pure, i.e., of the form $(M_{v},  u_v)$. We will prove that this implies that $\varphi(v)$ holds.

From
\[ D(v) \forces  \pi \in \tau_v ,\]
it follows that $v \leq u_v$. Then validity of 
\[  (\forall u \in 2^{\lt \NN}) \, (\forall v \in 2^{\lt \NN}) \, (v \leq u \land \varphi(u) \to \varphi(v)) \] implies that $D(v) \forces \varphi(v)$. By picking a point $\alpha \in v$ and using the monotonicity of forcing, one gets $\{ \alpha \} \forces \varphi(v)$, and hence $\varphi(v)$ by part 3 of \reflemm{lemmaonforcing}.
\end{proof}

\begin{rema}{termextraction}
By using the fact that {\bf CZF} has the numerical existence property \cite{rathjen05} we see that the conclusion of the previous theorem could be strengthened to: then there is a natural number $n$ such that ${\bf CZF} \vdash (\forall v \in < \, >[n]) \, \varphi(v)$. Indeed, there is a primitive recursive algorithm for extracting this $n$ from a formal derivation in ${\bf CZF}$.
\end{rema}

\begin{rema}{derivedruleforCauchyreals}
It is not hard to show that {\bf CZF} proves the existence of a definable surjection $2^\N \to [0,1]_{Cauchy}$ from Cantor space to the set of Cauchy reals lying in the unit interval. This, in combination with \reftheo{derivedfanrule}, implies that one also has a derived local compactness rule for the Cauchy reals in {\bf CZF}. It also implies that we have a local compactness rule for the Dedekind reals in {\bf CZF + AC$_\omega$} and in {\bf CZF + DC}, because both {\bf AC}$_\omega$ and {\bf DC} are stable under sheaves over the double of formal Cantor space (see \refprop{preservationofchoice}) and using either of these two axioms, one can show that the Cauchy and Dedekind reals coincide.
\end{rema}

Recall that we use {\bf CZF$^+$} to denote any theory extending {\bf CZF} which allows one to prove set compactness and which is stable under sheaves.

\begin{theo}{derivedbarrule} {\rm (Derived Bar Induction Rule)} Suppose $\varphi(x)$ is a formula defining a subclass of $\NN^{\lt \NN}$. If
\begin{eqnarray*}
{\bf CZF}^+ & \vdash & (\forall \alpha \in \NN^\NN) \, (\exists u \in \NN^{\lt \NN}) \, (\alpha \in u \land \varphi(u)) \mbox{ and } \\
{\bf CZF}^+ & \vdash  &  (\forall u \in \NN^{\lt \NN}) \, (\forall v \in \NN^{\lt \NN}) \, (v \leq u \land \varphi(u) \to \varphi(v)) \mbox{ and } \\
{\bf CZF}^+ & \vdash & (\forall u \in \NN^{\lt \NN}) \, ((\forall n \in \NN) \, \varphi(u * n) \to \varphi(u)),
\end{eqnarray*}
then ${\bf CZF}^+ \vdash \varphi(< \, >)$.
\end{theo}
\begin{proof} We reason in {\bf CZF}$^+$. We pass to sheaves over the double of formal Baire space ${\cal D}(\mathbf{B})$, where there is a global section $\pi$ of the sheaf $\N^\N$ defined by letting $\pi(n)$ be the equivalence class of
\[ \big( \, < \, >[n], \lambda x \in < \, >[n]. x(n) \, \big) \]
(which corresponds to the ``projection'' ${\cal D}(\mathbf{B}) \to \mathbf{B}$, as before). From 
\[ {\rm Sh}({\cal D}(\mathbf{B})) \models (\forall \alpha \in \NN^\NN) \, (\exists u \in \NN^{\lt \NN}) \, (\alpha \in u \land \varphi(u)), \] one gets
\[ D(< \, >) \forces  (\exists u \in \NN^{\lt \NN}) \, (\pi \in u \land \varphi(u)). \]
By the sheaf semantics this means that there is a cover $S$ of $< \, >$ in formal Baire space $\mathbf{B}$ such that for every $v \in S$ there is a pure $u \in \NN^{\lt \NN}$ such that
\[ D(v) \forces  \pi \in u \land \varphi(u). \]
Now $ D(v) \forces  \pi \in u$ implies $v \leq u$ and because sheaf semantics is monotone this in turn implies $D(v) \forces \varphi(v)$. By choosing a point $\alpha \in v$ and using monotonicity again, one obtains that $\{ \alpha \} \forces \varphi(v)$, and hence $\varphi(v)$ by part 3 of \reflemm{lemmaonforcing}.

Summarising: we have a cover $S$ such that for all $v \in S$ the statement $\varphi(v)$ holds. Hence $\varphi(< \, >)$ holds by \refcoro{formalBIM}.
\end{proof}

\begin{theo}{derivedcontinuityrule} {\rm (Derived Continuity Rule for Baire Space)} Suppose $\varphi(x, y)$ is a formula defining a subset of $\NN^\NN \times \NN^\NN$. If ${\bf CZF}^+ \vdash (\forall \alpha \in \NN^\NN) \, (\exists ! \beta \in \NN^{\NN}) \, \varphi(\alpha, \beta),$ then
\[ {\bf CZF}^+ \vdash (\exists f: \NN^\NN \to \NN^\NN) \, [ \, ((\forall \alpha \in \NN^\NN) \, \varphi(\alpha, f(\alpha))) \land f \mbox{ continuous} \,]. \]
\end{theo}
\begin{proof} Again, we work in ${\bf CZF}^+$ and pass to sheaves over the double of formal Baire space ${\cal D}(\mathbf{B})$, where there is a global section of the sheaf $\N^\N$, namely the projection $\pi: {\cal D}(\mathbf{B}) \to\mathbf{B}$. Since
\[ {\rm Sh}{\cal (D}(\mathbf{B})) \models (\exists ! \beta \in \NN^\NN) \, \varphi(\pi, \beta), \]
there exists a unique function $\rho: {\cal D}(\mathbf{B}) \to \mathbf{B}$ (and global section of $\NN^\NN$) such that
\[ D(< \, >) \vdash \varphi(\pi, \rho). \]
Consider the maps $\mu: \mathbf{B} \to {\cal D}(\mathbf{B})$ and $\nu: \mathbf{B}_{discr} \to {\cal D}(\mathbf{B})$ from Section 3.7. The continuity of $\rho$ implies that ${\rm pt}(\rho\mu) = {\rm pt}(\rho \nu): \N^\N \to \N^\N$; writing $f = {\rm pt}(\rho\mu)$, one sees that $f: \N^\N \to \N^\N$ is continuous. Moreover, if $\alpha \in \NN^\NN$, then $\{\alpha \} \forces \varphi({\rm pt}(\pi)(\alpha), {\rm pt}(\rho)( \alpha ))$, i.e.~$\{ \alpha \} \forces \varphi(\alpha, f(\alpha))$, and hence $\varphi(\alpha, f(\alpha))$.
\end{proof}

These proofs can be adapted in various ways to prove similar results for  (extensions of) {\bf CZF}, for instance:
\begin{itemize}
\item[-] \reftheo{derivedfanrule} holds for any extension of {\bf CZF} which is stable under sheaves over the double of formal Cantor space, such as the extension of {\bf CZF} with choice principles like {\bf DC} or ${\bf AC}_\omega$ (because of \refprop{preservationofchoice}).
\item[-] For the same reason \reftheo{derivedbarrule} and \reftheo{derivedcontinuityrule} remain valid if we extend ${\bf CZF}^+$ with choice principles. These results also hold for the theory {\bf CZF} +  ${\bf AC}_\omega$ + ``The Brouwer ordinals form a set'' (see Appendix B).
\item[-] The same method of proof as in \reftheo{derivedcontinuityrule} should establish a derived continuity rule for the Dedekind reals and many other definable formal spaces.
\end{itemize}

\appendix

\section{Independence of presentability of formal Baire space in CZF}

The aim of this appendix is to show that {\bf CZF} does not prove the existence of a presentation of formal Baire space. For this purpose, we use forcing over a site as in \cite{bergmoerdijk10b} rather than forcing over a formal space. Recall that in \cite{bergmoerdijk10b} we showed that what happens when one does forcing over a site is completely analogous to what happens when one does forcing over a formal space: it leads, provably in {\bf CZF}, to a sound semantics of {\bf CZF}, as long as the site is assumed to have a presentation (see \cite{bergmoerdijk10b} for the definition of a presentation for a site).

Let $\mathbb{S}$ be a small category of formal spaces and continuous maps, whose objects are basic open subsets of $\mathbf{B}^n$ of the form $B(u_1) \times \ldots \times B(u_n)$ (where $u_1, \ldots, u_n \in \N^{\lt \N}$ and we write $B(u)$ for the basic open determined by the finite sequence $u$) and whose maps contain the inclusions between open subsets and the projections. (For example, $\mathbb{S}$ could be given by these objects and \emph{all} continuous maps between them.) Equip $\mathbb{S}$ with the Grothendieck topology induced by the open covers of the formal space $\mathbb{S}$ and the projections (the latter are automatically included if, for example, constant maps are included so that projections have sections in $\mathbb{S}$).

\begin{lemm}{BpresSpres}
If $\mathbf{B}$ has a presentation, then so does $\mathbb{S}$.
\end{lemm}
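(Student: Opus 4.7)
The strategy is to propagate the given presentation of $\mathbf{B}$ first to each product $\mathbf{B}^n$, and then from there to the site $\mathbb{S}$, whose generating covers are the open covers of each $\mathbf{B}^n$ together with the projection singletons. Throughout, the ``small per object'' property will rest on the fact that $\mathbb{S}$ is itself small and that the presentation for $\mathbf{B}$ assigns a small set of basic covers to each basic open.

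\emph{Stage 1: products.} Given a presentation ${\rm BCov}_\mathbf{B}$ for $\mathbf{B}$, I would set the basic covers of $B(u_1) \times \cdots \times B(u_n)$ in $\mathbf{B}^n$ to be the sieves generated by product families $\{B(v_1) \times \cdots \times B(v_n) : v_i \in R_i\}$ with $R_i \in {\rm BCov}_\mathbf{B}(u_i)$. This is a small collection for each basic open of $\mathbf{B}^n$, and by induction on $n$ using the presentation axiom for each factor (together with local character), every open cover of a basic open in $\mathbf{B}^n$ is refined by such a product basic cover. This furnishes a presentation ${\rm BCov}_{\mathbf{B}^n}$ for $\mathbf{B}^n$.

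\emph{Stage 2: the site $\mathbb{S}$.} At an object $a = B(u_1) \times \cdots \times B(u_n)$ of $\mathbb{S}$, I would take ${\rm BCov}_\mathbb{S}(a)$ to consist of (i) the sieves in $\mathbb{S}$ obtained by restricting a basic cover from ${\rm BCov}_{\mathbf{B}^n}(a)$ to morphisms of $\mathbb{S}$, and (ii) the singleton sieves $\{\pi\}$ generated by a single projection morphism $\pi : b \to a$ in $\mathbb{S}$, where $b$ is of the form $a \times B(u_{n+1}) \times \cdots \times B(u_m)$. Both families are indexed by small sets (the first by Stage 1 and the second by finite tuples from $\mathbb{N}^{<\mathbb{N}}$), so ${\rm BCov}_\mathbb{S}(a)$ is small.

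\emph{Main obstacle.} The principal difficulty is to check that every cover of $a$ in the Grothendieck topology on $\mathbb{S}$ contains a sieve from ${\rm BCov}_\mathbb{S}(a)$. I would carry this out by verifying that ${\rm BCov}_\mathbb{S}$ is closed under pullback along morphisms of $\mathbb{S}$: inclusions transport product covers to product covers on the smaller open and projection covers to projection covers over the smaller base, while projections transport a product cover of $a$ to the product cover on $a \times c$ obtained by taking the maximal sieve on the extra factor, and a projection $\{\pi' : a' \to a\}$ to the evident projection $\{\pi' \times {\rm id}_c : a' \times c \to a \times c\}$. Once pullback-stability is secured, the standard theory of bases for Grothendieck topologies shows that the topology generated by ${\rm BCov}_\mathbb{S}$ coincides with the given one on $\mathbb{S}$ and that ${\rm BCov}_\mathbb{S}$ serves as the desired presentation.
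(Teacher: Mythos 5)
Your Stage 1 matches the paper, but Stage 2 has a genuine gap, and it is located exactly where you flag the ``main obstacle''. The topology on $\mathbb{S}$ is generated by open covers \emph{and} projections jointly, so a typical covering sieve of $X = B(u_1)\times\cdots\times B(u_n)$ is one generated by a family of composites $\{U_i \times Y_i \to X\}$, where $\{U_i\}$ is an open cover of $X$ and each $U_i \times Y_i \to U_i$ is a projection. Such a sieve contains neither of your two kinds of basic cover: it does not contain the sieve generated by the inclusions $\{U_i \to X\}$ (that would require each inclusion $U_i \to X$ to factor through $U_i \times Y_i \to X$, i.e.\ a section of the projection, which $\mathbb{S}$ is not assumed to contain), and it does not contain any single projection sieve $\{X\times Y \to X\}$ either. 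So your ${\rm BCov}_{\mathbb{S}}$ fails the defining condition of a presentation, namely that \emph{every} covering sieve contain a basic one as a subset. The paper's proof avoids this by taking the two ingredients \emph{combined}: its basic covers of $X$ are the families $\{U_i \times Y_i \to X\}$ with $\{U_i\} \in {\rm BCov}_{B(u_1)\times\cdots\times B(u_n)}(X)$ and $Y_i \in \mathbb{S}$ arbitrary, which is exactly the shape of sieve produced by interleaving open covers with projections.

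The second, related problem is your closing appeal to ``the standard theory of bases'': pullback-stability of a small generating family shows that it \emph{generates} the given topology, but not that every covering sieve of the generated topology contains a member of the family. In the predicative setting of this paper that distinction is the whole point --- a pullback-stable small family is precisely a covering system in the sense of \refdefi{coveringsystem}, and \reftheo{coveringsystem} only yields presentability of the generated topology in ${\bf CZF}^+$ (via set compactness), while Appendix~A shows that formal Baire space itself, though inductively generated by a small covering system, is \emph{not} provably presentable in ${\bf CZF}$. Since \reflemm{BpresSpres} must hold in ${\bf CZF}$ (it feeds into the independence argument of \refcoro{BhasnopresinCZF}), you cannot discharge the obstacle this way; you need to exhibit basic covers that are, up to refinement, closed under the compositions occurring in the induced topology, as the paper's explicit formula does.
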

\begin{proof}
This is clear from the fact that if $\mathbf{B}$ has a presentation, so does each $B(u_1) \times \ldots \times B(u_n)$. In fact, to be explicit, for $X = B(u_1) \times \ldots \times B(u_n)$ the formula
\[ {\rm BCov}_{\mathbb{S}}(X) = \big\{ \, \{ U_i \times Y_i \to X \} \, : \, Y_i \in \mathbb{S}, \{ U_i \}  \in {\rm BCov}_{B(u_1) \times \ldots \times B(u_n)}(X) \, \big\} \]
defines a presentation of $\mathbb{S}$.
\end{proof}

By the lemma it follows that if $\mathbf{B}$ has a presentation, the sheaves on the site $\mathbb{S}$ provide a model for ${\bf CZF}$. We observe the following property of the model:

\begin{prop}{BIissheavesoverS} {\rm (See \cite{fourman84} and \cite[Section 15.6]{troelstravandalen88b}.)} Assume $\mathbf{B}$ has a presentation. Then Monotone Bar Induction holds in the {\bf CZF}-model given by sheaves on $\mathbb{S}$.
\end{prop}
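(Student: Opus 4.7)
The plan is to verify Monotone Bar Induction in the sheaf model over $\mathbb{S}$ stage by stage. Fix $X \in \mathbb{S}$ and an internal subset $S \subseteq \N^{\lt\N}$ (i.e., a subsheaf of $\N^{\lt\N}$) such that $X$ forces that $S$ is a bar, monotone, and inductive. The goal is $X \forces \langle\,\rangle \in S$. It suffices to show $X \times \mathbf{B} \forces \langle\,\rangle \in S$, because the projection $X \times \mathbf{B} \to X$ admits a section (any constant continuous map $X \to \mathbf{B}$ landing in some basic open), so $\{X \times \mathbf{B} \to X\}$ is a singleton cover in $\mathbb{S}$ and local character of sheaf semantics transfers the conclusion to $X$.

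At stage $X \times \mathbf{B}$, the second projection gives a canonical ``generic point'' $\gamma \in \N^{\N}(X \times \mathbf{B})$. Applying the forced bar hypothesis to $\gamma$ yields $X \times \mathbf{B} \forces \exists n \in \N.\ \gamma|_n \in S$. Unpacking the sheaf semantics of $\exists$ via the presentation of $\mathbb{S}$ (\reflemm{BpresSpres}) and the density of pure natural numbers (\refrema{pureelements}), I obtain a basic-open cover $\{X_i \times B(v_i) \to X \times \mathbf{B}\}$ with external natural-number witnesses $n_i = |v_i|$ such that $X_i \times B(v_i) \forces v_i \in S$, where $v_i$ is the pure constant sequence.

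I then apply Bar Induction for formal Baire space (\refcoro{formalBIM}) externally to the predicate $P(u) :\Leftrightarrow X \times B(u) \forces u \in S$. After descending the $X$-side of the cover (using local character on the $\{X_i\}$), the family $\{v_i\}$ is a formal cover of $\langle\,\rangle$ in $\mathbf{B}$ on which $P$ holds. For the inductive clause of \refcoro{formalBIM}, suppose $P(u \ast k)$ holds for every $k \in \N$; then the family of pure constants $\big(u \ast k \in S(X \times B(u \ast k))\big)_k$ is trivially compatible over the disjoint cover $\{X \times B(u \ast k)\}_k$ of $X \times B(u)$, and by the sheaf property of $S$ it glues to the ``generic one-step extension'' $u \ast \gamma(|u|) \in S(X \times B(u))$. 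Combining this gluing with the forced monotonicity and inductiveness of $S$ then gives $P(u)$; concluding via \refcoro{formalBIM} yields $P(\langle\,\rangle)$, which is the desired $X \times \mathbf{B} \forces \langle\,\rangle \in S$.

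The main obstacle is justifying the last step of the inductive clause in full: the forced antecedent of inductiveness, $X \times B(u) \forces \forall n \in \N.\ u \ast n \in S$, quantifies over all internal natural-number sections, which by the structure of $\N$ in sheaves over $\mathbb{S}$ is strictly richer than external pure constants alone. The delicate verification combines the gluing $X \times B(u) \forces u \ast \gamma(|u|) \in S$ with density of pure naturals and local character of the sheaf semantics along the canonical cover $\{X \times B(u \ast k)\}_k$, where $\gamma(|u|)$ takes value $k$ on $X \times B(u \ast k)$. The detailed argument in this setting is carried out in \cite{fourman84} and \cite[Section 15.6]{troelstravandalen88b}, the references cited in the proposition statement.
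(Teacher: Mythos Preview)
Your inductive step does not go through with the predicate $P(u) \Leftrightarrow X \times B(u) \forces u \in S$. From $P(u*k)$ for all $k$, i.e.\ $X \times B(u*k) \forces u*k \in S$, the gluing you describe indeed yields $X \times B(u) \forces u*\gamma(|u|) \in S$, but this is strictly weaker than the antecedent of forced inductiveness. To invoke $(\forall n\, u*n \in S) \to u \in S$ at stage $X \times B(u)$ you would need $X \times B(u) \forces u*n \in S$ for every external $n$; you only know this at the smaller stage $X \times B(u*n)$. Restricting $u*\gamma(|u|) \in S$ to $X \times B(u*k)$ just returns $u*k \in S$ there, so nothing new is gained, and forced monotonicity goes the wrong way (it propagates membership to extensions, not to initial segments). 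The same entanglement undermines your ``descent'' of the base case: from $X_i \times B(v_i) \forces v_i \in S$ you cannot pass to $X \times B(v_i) \forces v_i \in S$ by local character unless the $X_i$'s with a fixed $v_i$ already cover $X$, which nothing guarantees.

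The paper's proof avoids this by working two-dimensionally: it takes $\mathcal{W} = \{\, U \times B(v) : U \forces v \in S \,\}$ as a predicate on basic opens of the product $X \times \mathbf{B}$ and applies induction on covers (\reftheo{indoncovers}) there, not Bar Induction on $\mathbf{B}$ alone. The point is that membership in $\mathcal{W}$ records forcing at $U$, not at $U \times B(v)$; hence in the inductive clause one has $U \forces v*n \in S$ for all $n$ at the \emph{same} stage $U$, which is exactly $U \forces \forall n\, v*n \in S$, and forced inductiveness applies directly. The closure of $\mathcal{W}$ under covers on the $U$-side (local character) then handles what you tried to do by ``descent''. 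If you want to salvage a one-dimensional argument, the right predicate is $Q(v) \Leftrightarrow U \forces v \in S$ for a fixed $U$, but then the base cover on which $Q$ holds depends on $U$, and you are forced back to the product-space induction the paper uses.
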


\begin{coro}{BhasnopresinCZF}
The theory {\bf CZF + DC} does not prove that $\mathbf{B}$ has a presentation.
\end{coro}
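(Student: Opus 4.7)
The plan is to derive a contradiction from the assumption that ${\bf CZF}+{\bf DC}$ proves $\mathbf{B}$ has a presentation, by showing this would allow one to derive Monotone Bar Induction in ${\bf CZF}+{\bf DC}$, contrary to a known independence result.

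First I would suppose, toward a contradiction, that ${\bf CZF}+{\bf DC}\vdash$ ``$\mathbf{B}$ has a presentation''. By \reflemm{BpresSpres}, the site $\mathbb{S}$ then has a presentation as well, provably in ${\bf CZF}+{\bf DC}$. By the site-version of \reftheo{sheavessoundforCZF} developed in \cite{bergmoerdijk10b}, sheaves on $\mathbb{S}$ form a model of ${\bf CZF}$. Since the underlying spaces of $\mathbb{S}$ are second countable, every cover in $\mathbb{S}$ admits a countable disjoint refinement, so an adaptation of \refprop{preservationofchoice} to the site $\mathbb{S}$ yields that ${\bf DC}$ is preserved. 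Combined with \refprop{BIissheavesoverS}, this shows that, under our assumption, sheaves on $\mathbb{S}$ model ${\bf CZF}+{\bf DC}+$ Monotone Bar Induction, provably in ${\bf CZF}+{\bf DC}$.

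Next I would use a derived-rule argument, in the spirit of \reftheo{derivedbarrule}, to transfer Monotone Bar Induction from the sheaf model back to the metatheory. Concretely, for any formula $\varphi(u)$ for which ${\bf CZF}+{\bf DC}$ proves the bar, monotonicity and inductivity hypotheses, these transfer by soundness to the sheaf interpretation, where Monotone Bar Induction then yields $\varphi(\langle\,\rangle)$ internally; a final extraction step, analogous to the use of monotonicity and part~3 of \reflemm{lemmaonforcing} in the proof of \reftheo{derivedbarrule}, produces $\varphi(\langle\,\rangle)$ externally. Hence, if our assumption held, then ${\bf CZF}+{\bf DC}$ would satisfy the derived Monotone Bar Induction rule.

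The main obstacle is to conclude by witnessing the failure of this derived rule for ${\bf CZF}+{\bf DC}$. For this I would appeal to the well-known fact that Monotone Bar Induction is not derivable from ${\bf CZF}+{\bf DC}$---for instance via a realizability interpretation (such as number realizability) in which ${\bf CZF}+{\bf DC}$ is sound while Monotone Bar Induction fails for a suitably definable primitive recursive predicate $\varphi$. This last independence ingredient supplies the required contradiction and completes the argument.
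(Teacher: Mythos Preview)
Your final step is where the argument breaks down. You want to conclude by exhibiting a definable $\varphi$ for which ${\bf CZF}+{\bf DC}$ proves the bar, monotonicity and inductivity hypotheses but does \emph{not} prove $\varphi(\langle\,\rangle)$; that is, you need the derived Monotone Bar Induction rule to \emph{fail} for ${\bf CZF}+{\bf DC}$. But the independence of MBI as an \emph{axiom} (via realizability or otherwise) does not give you this: a realizability model in which MBI fails only tells you there is some internal $S$ witnessing the failure, not that there is a formula $\varphi$ whose bar-property is \emph{provable} in ${\bf CZF}+{\bf DC}$ while $\varphi(\langle\,\rangle)$ is not. Indeed, the central message of the paper is precisely that derived rules of this shape \emph{do} hold for constructive set theories even though the corresponding axioms are independent; so there is no reason to expect, and no known way to show, that the derived rule fails for ${\bf CZF}+{\bf DC}$. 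Your step~5 is also underspecified---the site $\mathbb{S}$ has no minimal conditions at which forcing collapses to truth, so the extraction ``analogous to part~3 of \reflemm{lemmaonforcing}'' does not go through as stated---but even granting it, the argument would not close.

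The paper's own proof avoids all of this by a relative-consistency argument: assuming ${\bf CZF}+{\bf DC}$ proves $\mathbf{B}$ presentable, one builds \emph{inside} ${\bf CZF}+{\bf DC}$ the sheaf model on $\mathbb{S}$, which by \refprop{BIissheavesoverS} satisfies ${\bf CZF}+{}$MBI. Hence ${\rm Con}({\bf CZF}+{\bf DC})$ would imply ${\rm Con}({\bf CZF}+{}$MBI$)$. This is impossible because ${\bf CZF}+{}$MBI has strictly greater proof-theoretic strength than ${\bf CZF}+{\bf DC}$ (the paper cites Rathjen). No extraction step and no failure of a derived rule is needed.
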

\begin{proof}
If {\bf CZF} + {\bf DC} would prove that $\mathbf{B}$ has a presentation, then, by the proposition, this would imply that the consistency of {\bf CZF} + {\bf DC} implies the consistency of {\bf CZF} + Monotone Bar Induction. But the latter is known to have greater proof-theoretic strength (see \cite{rathjen06c}).
\end{proof}

\begin{proof} (Of \refprop{BIissheavesoverS}.) Suppose $X \in \mathbb{S}$ and $S \in {\rm Pow}(\N^{\lt \N})(X)$ is a (small) subsheaf of $\N^{\lt \N}$ which forms an ``internal bar''; i.e.,
\begin{enumerate}
\item[(1)] $X \forces (\forall \alpha \in \N^\N) \, (\exists n \in \N) \, (\alpha(0), \ldots, \alpha(n)) \in S$, 
\item[(2)] $X \forces (\forall u, v \in \N^{\lt \N}) \, (u \leq v \land v \in S \to u \in S)$,
\item[(3)] $X \forces (\forall u \in \N^{\lt \N}) \, ( (\forall n \in N) u * <n> \in S \to u \in S)$.
\end{enumerate}
The projection $\pi_2: X \times \mathbf{B} \to\mathbf{B}$ at the stage $\pi_1: X \times \mathbf{B} \to X$ over $X$ represents a (generic) element of the sheaf $\N^\N$, and (1) implies
\[ X \times \mathbf{B} \forces (\exists n \in \N) \, (\pi_2(0), \ldots, \pi_2(n)) \in S. \]
By definition, this means that there is a cover of $X \times \mathbf{B}$ by basic opens $U_i \times B(v_i)$ such that for each $i$ we have that
\begin{enumerate}
\item[(4)] $(\exists n \in \NN) \, U_i \times B(v_i) \forces (\pi_2(0), \ldots, \pi_2(n)) \in S$
\end{enumerate}
(where we simply write $S$ for the restriction of $S$ along $U_i \times B(v_i) \to U_i \subseteq X$). We claim that we can choose the cover $U_i \times B(v_i)$ in such a way that for each $i$ it holds that
\begin{enumerate}
\item[(5)] $U_i \forces v_i \in S$.
\end{enumerate}
Indeed, if we can choose the $n$ in (4) such that $n \leq |v_i|$, then (4) implies (5) by assumption (2) on $S$. On the other hand, if (4) holds for $n \gt |v_i|$, we can replace the single element $U_i \times B(v_i)$ in the cover by all elements of the form $U_i \times B(w)$ where $w$ is an extension of $v_i$ of length $n$. Then $U_i \times B(w) \forces w \in S$ by (4) and monotonicity of forcing, hence $U_i \forces w \in S$ because projections cover.

Let
\[ {\cal W} = \big\{ \, U \times B(v) \in \mathbb{S} \, : \, U \subseteq X \mbox{ open}, v \in \N^{\lt \N} \mbox{ and } U \forces v \in S \,  \big\}. \]
Then ${\cal W}$ covers $X \times \mathbf{B}$ as we have just seen. Moreover, if any $U \times B(v)$ is covered by elements of ${\cal W}$ then it belongs to ${\cal W}$. Indeed, to show this it suffices to prove the following two properties:
\begin{enumerate}
\item[(6)] If $\{ U_i \}$ covers $U$ and $U_i \times B(v) \in {\cal W}$, then $U \times B(v) \in {\cal W}$.
\item[(7)] If $U \times B(v * <n>) \in {\cal W}$ for each $n$, then $U \times B(v) \in {\cal W}$.
\end{enumerate}
But (6) holds by the local character of forcing, while (7) holds by assumption (3) on $S$. By induction on covers (\reftheo{indoncovers}) we conclude that $X \forces <> \in S$, which completes the proof.
\end{proof}

\section{Brouwer ordinals}

Recall that we defined ${\bf CZF}^+$ to be any extension of {\bf CZF} in which the set compactness theorem is provable and which is stable under sheaves. We do not expect that {\bf CZF} + {\bf AC}$_\omega$ + ``The Brouwer ordinals form a set'' is such a theory ${\bf CZF}^+$. Nevertheless, our main results apply to this theory as well. To show this, we have to prove (1) that this theory proves that formal Baire space is presentable and (2) that this theory is stable under taking sheaves over the double of formal Baire space. In this appendix we work out the details.

First, we recall the definition of the Brouwer ordinals.
\begin{defi}{brouwerordinals}
The class $BO$ of \emph{Brouwer ordinals} is the smallest class closed under the rules:
\begin{displaymath}
\begin{array}{c}
* \in BO, \\
t: \N \to BO \Rightarrow {\rm sup}(t) \in BO.
\end{array}
\end{displaymath}
In other words, it is the W-type associated to the constant map $\NN \to 2$ with value 1 or the initial algebra for the functor $F(X) = 1 + X^\N$ (see \cite{moerdijkpalmgren00}).
\end{defi}

Our proof that the theory {\bf CZF} + {\bf AC}$_\omega$ + ``The Brouwer ordinals form a set'' shows that formal Baire space has a presentation, is based on an alternative description of formal Baire space. For this, define ${\rm BCov}(< \, >)$ be smallest subclass of ${\rm Pow}(\NN^{\lt \NN})$ such that:
\begin{displaymath}
\begin{array}{l}
\{ < \, > \} \in {\rm BCov}(< \, >) \\
\forall i \in \NN: \, S_i \in {\rm BCov}(< \, >)  \Rightarrow \bigcup_{i \in \N} <i> * S_i \in {\rm BCov}(< \, >)
\end{array}
\end{displaymath}
This inductive definition makes sense in {\bf CZF} even when the Brouwer ordinals only form a class. 

\begin{lemm}{BOsetimpliesBCovset}
If $BO$ is a set, then so is ${\rm BCov}(< \, >)$.
\end{lemm}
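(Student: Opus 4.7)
My plan is to exhibit ${\rm BCov}(<>)$ as the image of a set-function whose domain is $BO$; once this is done, strong collection immediately yields that ${\rm BCov}(<>)$ is a set.

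Concretely, I would define a function $\Psi: BO \to {\rm Pow}(\NN^{\lt \NN})$ by recursion on the W-type structure of $BO$ by the clauses
\[ \Psi(*) = \{ <\,> \}, \qquad \Psi({\rm sup}(t)) = \bigcup_{i \in \N} <i> * \Psi(t(i)). \]
Such a definition is legitimate in {\bf CZF} by the standard recursion principle for W-types. By induction on $BO$ one sees that every value $\Psi(b)$ lies in ${\rm BCov}(<>)$, so ${\rm ran}(\Psi) \subseteq {\rm BCov}(<>)$. If $BO$ is a set, then strong collection gives that ${\rm ran}(\Psi)$ is a set.

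For the reverse inclusion, I would show by induction on the inductive definition of ${\rm BCov}(<>)$ that every element of ${\rm BCov}(<>)$ belongs to ${\rm ran}(\Psi)$. The base case is immediate since $\{<\,>\} = \Psi(*)$. For the inductive step, suppose $S_i \in {\rm ran}(\Psi)$ for every $i \in \N$; that is, for each $i$ there exists $b \in BO$ with $\Psi(b) = S_i$. Applying ${\bf AC}_\omega$, one obtains a function $t: \N \to BO$ with $\Psi(t(i)) = S_i$ for every $i$, and then $\bigcup_{i \in \N} <i>*S_i = \Psi({\rm sup}(t)) \in {\rm ran}(\Psi)$, as required.

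The only nontrivial point is the appeal to ${\bf AC}_\omega$ in the inductive step of the second containment; the hypothesis $\NN \to \exists b \, \Psi(b) = S_i$ would otherwise not yield the required choice function $t$. Since the theory under consideration in this appendix is {\bf CZF} + ${\bf AC}_\omega$ + ``$BO$ is a set'', this use is legitimate, and the proof goes through.
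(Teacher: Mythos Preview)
Your proof is correct and follows exactly the paper's approach: define the recursive map from $BO$ into ${\rm Pow}(\NN^{<\NN})$, identify ${\rm BCov}(<\,>)$ with its image, and invoke replacement. One minor remark: your appeal to ${\bf AC}_\omega$ in the surjectivity step can be avoided by first noting that $\Psi$ is injective (an easy induction, using that the operations $S \mapsto \langle i\rangle * S$ have disjoint ranges for distinct $i$ and never produce $\{<\,>\}$), so that unique choice suffices; the paper does not flag this lemma as requiring ${\bf AC}_\omega$, and this is presumably the intended reading.
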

\begin{proof}
Define a map $k: BO \to {\rm Pow}(\N^{\lt \N})$ by recursion:
\begin{eqnarray*}
k(*) & = & \{ < \, > \}, \\
k({\rm sup}(t)) & = & \bigcup_{i \in \N} <i> * t(i).
\end{eqnarray*}
Its image is ${\rm BCov}(<>)$ and therefore it follows from replacement that it is a set, if $BO$ is a set.
\end{proof}

Put:
\begin{eqnarray*}
S \in {\rm BCov}(u)  & \Leftrightarrow & \exists T \in {\rm BCov}(< \, >): u * T \in {\rm BCov}(u)  \\
S \in {\rm Cov}(u) & \Leftrightarrow & \exists T \in {\rm BCov}(u): T \subseteq S.
\end{eqnarray*}

\begin{lemm}{hilflemma}
\begin{enumerate}
\item Every $T \in {\rm BCov}(u)$ is countable.
\item Suppose $R_v \in {\rm BCov}(v)$ is a collection of basic covering sieves indexed by elements $v$ from a sieve $T$.  If $T$ belongs to ${\rm BCov}(u) $ then so does $\bigcup_{v \in T} R_v$.
\item If $T \in {\rm BCov}(u)$ and $v \leq u$, then there is an $S \in {\rm BCov}(v)$ such that $S \subseteq v^* \downarrow T$.
\end{enumerate}
\end{lemm}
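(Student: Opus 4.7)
All three parts will be proved by induction on the inductive generation of ${\rm BCov}(<>)$ (equivalently, by recursion on the Brouwer ordinals via the function $k$ from \reflemm{BOsetimpliesBCovset}). It will be convenient to use the following rephrasing: ${\rm BCov}(u)$ is the smallest class containing $\{u\}$ and closed under the rule that $\bigcup_{i \in \NN} Q_i \in {\rm BCov}(u)$ whenever each $Q_i \in {\rm BCov}(u * <i>)$. This follows from the definition because every $T \in {\rm BCov}(u)$ is of the form $u * T'$ for some $T' \in {\rm BCov}(<>)$, and left-concatenation with $u$ transports the inductive definition of ${\rm BCov}(<>)$ to ${\rm BCov}(u)$.

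For (1), I would define by recursion on $a \in BO$ a surjection $e_a : \NN \to k(a)$: set $e_*$ to be the constant map with value $<>$, and given $e_{t(i)} : \NN \to k(t(i))$ for each $i$, fix once and for all a bijection $\NN \to \NN \times \NN$, $n \mapsto (p(n), q(n))$, and set $e_{{\rm sup}(t)}(n) = <p(n)> * e_{t(p(n))}(q(n))$. Since every $T \in {\rm BCov}(<>)$ is of the form $k(a)$, this gives countability; the case of a general $u$ follows since $T = u * T'$ is countable whenever $T'$ is. Crucially, the enumerations are produced in tandem with the recursion, so no appeal to ${\bf AC}_\omega$ is required.

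For (2), I would proceed by induction on the rephrased generation of ${\rm BCov}(u)$. The base case $T = \{u\}$ yields $\bigcup_{v \in T} R_v = R_u \in {\rm BCov}(u)$ directly from the hypothesis. In the inductive step $T = \bigcup_i Q_i$ with $Q_i \in {\rm BCov}(u*<i>)$, the induction hypothesis applied to each $Q_i$ with the subfamily $(R_v)_{v \in Q_i}$ gives $\bigcup_{v \in Q_i} R_v \in {\rm BCov}(u*<i>)$, and one application of the closure rule yields $\bigcup_{v \in T} R_v = \bigcup_i \bigcup_{v \in Q_i} R_v \in {\rm BCov}(u)$.

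For (3), writing $T = u*T'$ with $T' \in {\rm BCov}(<>)$, I would induct on $T'$. If $T' = \{<>\}$, then $T = \{u\}$, so $v^* \downarrow T = \downarrow v$ (using $v \leq u$), and $\{v\} \in {\rm BCov}(v)$ is the required $S$. If $T' = \bigcup_i <i>*T'_i$, then either $v = u$, in which case $S := T$ works since $T \subseteq v^* \downarrow T$, or $v$ strictly extends $u$, say $v = u*<j>*v'$; in the latter case the induction hypothesis applied to $T'_j$ with $u$ replaced by $u*<j>$ (and the same $v$, for which $v \leq u*<j>$) produces $S \in {\rm BCov}(v)$ with $S \subseteq v^* \downarrow ((u*<j>)*T'_j) \subseteq v^* \downarrow T$. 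The main technical point really lies in part (1): to avoid invoking countable choice predicatively, the enumerations must be generated hand-in-hand with the inductive witnesses, which is why the $BO$-parametrisation is useful; parts (2) and (3) are then straightforward inductions.
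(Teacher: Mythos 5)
Your argument is correct and is essentially the paper's own (one-line) proof spelled out: reduce to $u = < \, >$ by writing $T = u * T'$ with $T' \in {\rm BCov}(< \, >)$ and induct on the generation of $T'$; parts (2) and (3) are fine as written. Two small caveats on part (1): a surjection $e_a: \NN \to k(a)$ gives enumerability, whereas the paper defines \emph{countable} as finite or denumerable --- the upgrade does go through because $\NN^{\lt \NN}$ has decidable equality and every $T \in {\rm BCov}(< \, >)$ other than $\{ < \, > \}$ meets $<i>*\NN^{\lt\NN}$ for every $i$ and is therefore unbounded, but this step should be made explicit; and your claim that the $BO$-parametrisation avoids ${\bf AC}_\omega$ only holds if one takes ${\rm BCov}(< \, >)$ to \emph{be} the image of $k$, since showing that every $T$ generated by the two closure rules equals some $k(a)$ already requires choosing, for each $i \in \NN$, an element $a_i$ with $k(a_i) = S_i$. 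Neither point matters for the lemma's applications, since ${\bf AC}_\omega$ is ambient throughout Appendix B and only an enumeration of $T$ is ever used.
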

\begin{proof}
It suffices to prove these statements in the special case where $u = < \, >$; in that case, they follow easily by induction on $T$.
\end{proof}

\begin{prop}{BSassiteCAC} {\rm ({\rm ${\bf AC}_\omega$}) } $(\NN^{\lt \NN}, {\rm Cov})$ as defined above is an alternative description of formal Baire space and therefore formal Baire space is presentable whenever the Brouwer ordinals form a set.
\end{prop}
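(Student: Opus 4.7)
The plan is to verify that the topology $\mathrm{Cov}$ built in this appendix from $\mathrm{BCov}$ coincides with the usual topology $\mathrm{Cov}_{\mathbf{B}}$ on formal Baire space generated by the covering system $C(u) = \{\{u * \langle n\rangle : n \in \N\}\}$, and then to combine this with \reflemm{BOsetimpliesBCovset} to deduce presentability. First, I would establish \reflemm{hilflemma}: all three parts follow by straightforward induction on the inductive definition of $\mathrm{BCov}(\langle\,\rangle)$, reducing the general case $\mathrm{BCov}(u)$ to the base case via the definition $\mathrm{BCov}(u) = \{u * T : T \in \mathrm{BCov}(\langle\,\rangle)\}$.

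Next I would verify that $\mathrm{Cov}$ is a Grothendieck topology on $\N^{\lt\N}$. Maximality is immediate from $\{u\} \in \mathrm{BCov}(u)$ (obtained from the base clause $\{\langle\,\rangle\} \in \mathrm{BCov}(\langle\,\rangle)$). Stability follows directly from part (3) of \reflemm{hilflemma}. For local character, given $R \in \mathrm{Cov}(u)$ with a witness $T \in \mathrm{BCov}(u)$, $T \subseteq R$, and for each $v \in T$ a cover $v^*S \in \mathrm{Cov}(v)$, I would invoke part (1) of \reflemm{hilflemma} that $T$ is countable, so ${\bf AC}_\omega$ delivers a uniform choice of witnesses $T_v \in \mathrm{BCov}(v)$ with $T_v \subseteq v^*S$; then part (2) glues these into a member of $\mathrm{BCov}(u)$ contained in $S$. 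This is the step that really uses the choice hypothesis.

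With $\mathrm{Cov}$ a topology in hand, I would check that it contains every generating cover $\{u * \langle n\rangle : n \in \N\}$; this is witnessed by applying the recursive clause of $\mathrm{BCov}(\langle\,\rangle)$ with each $S_i = \{\langle\,\rangle\}$. By the minimality property in \reftheo{coveringsystem}, this yields $\mathrm{Cov}_{\mathbf{B}} \subseteq \mathrm{Cov}$. For the converse $\mathrm{Cov} \subseteq \mathrm{Cov}_{\mathbf{B}}$, I would show $\mathrm{BCov}(u) \subseteq \mathrm{Cov}_{\mathbf{B}}(u)$ by induction on the defining rules of $\mathrm{BCov}(\langle\,\rangle)$: the base case is the maximal sieve, and in the inductive step, applying the inductive hypothesis to each $S_i$ together with stability and local character of $\mathrm{Cov}_{\mathbf{B}}$ relative to the generating cover $\{\langle i\rangle : i \in \N\}$ produces $\bigcup_i \langle i\rangle * S_i$ as a member of $\mathrm{Cov}_{\mathbf{B}}(\langle\,\rangle)$.

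Finally, for presentability: by \reflemm{BOsetimpliesBCovset}, if $BO$ is a set then $\mathrm{BCov}(\langle\,\rangle)$ is a set, and hence each $\mathrm{BCov}(u) = \{u * T : T \in \mathrm{BCov}(\langle\,\rangle)\}$ is a set by replacement. Since $\mathrm{BCov}$ was built precisely so that $S \in \mathrm{Cov}(u) \iff \exists R \in \mathrm{BCov}(u): R \subseteq S$, it qualifies as a presentation in the sense of \refdefi{setprformalsp}. I expect the main subtlety to be the local character verification in the second step, since that is where the countability guaranteed by part (1) of \reflemm{hilflemma} must interact with ${\bf AC}_\omega$ to supply a uniform family of basic refinements indexed by an a priori only countable, not necessarily $\N$-indexed, sieve.
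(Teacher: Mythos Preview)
Your proposal is correct and follows essentially the same route as the paper: verify the three site axioms for $\mathrm{Cov}$ using the three parts of \reflemm{hilflemma} (with ${\bf AC}_\omega$ entering only in local character), then identify $\mathrm{Cov}$ with $\mathrm{Cov}_{\mathbf{B}}$ by showing it is the least topology containing the generating covers, and finally read off presentability from \reflemm{BOsetimpliesBCovset}. The paper phrases the coincidence step as ``$\mathrm{Cov}$ is the smallest topology containing the generators'' and proves one direction by induction on $T \in \mathrm{BCov}(u)$, which is exactly your two-inclusion argument repackaged; the subtlety you flag about countable-versus-$\N$-indexed is handled in the paper by observing that for finite $T$ the finite axiom of choice (provable in {\bf CZF}) suffices, so ${\bf AC}_\omega$ is only needed in the denumerable case.
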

\begin{proof} We begin by showing that $(\NN^{\lt \NN}, {\rm Cov})$ is a formal space. Since maximality is clear and stability follows from item 3 of the previous lemma, it remains to check local character.

Suppose $S$ is a sieve on $u$ and there is a sieve $R \in {\rm Cov}(u)$ such that for all $v \in R$ the sieve $v^*S$ belongs to ${\rm Cov}(v)$. Since $R \in {\rm Cov}(u)$ there is a $T \in {\rm BCov}(u)$ such that $T \subseteq R$. Therefore we have for any $v \in T$ that $v^*S$ covers $v$ and hence that there is a $Z \in {\rm BCov}(v)$ such that $Z \subseteq S$. Since $T$ is countable, we can use ${\bf AC}_\omega$ or the finite axiom of choice (which is provable in {\bf CZF}) to choose the elements $Z$ as a function $Z_v$ of $v \in T$. Then let $K =\bigcup_{v \in T} Z_v$. $K$ is covering by the previous lemma and because
\[ K =  \bigcup_{v \in T} Z_v \subseteq S, \]
the same must be true for $S$. 

It now follows from the discussion preceeding the definition of ${\rm BCov}(u)$ above that each ${\rm BCov}(u)$ will be a set, if the Brouwer ordinals form a set. Hence the formal space $(\NN^{\lt \NN}, {\rm Cov})$ will be presentable if $BO$ is a set.

To easiest way to prove that we have given a different presentation of formal Baire space is to show that Cov is the smallest topology such that
\[ \downarrow \{ u * <n> \, : \, n \in \N \}  \in {\rm Cov}(u). \]
Clearly, Cov has this property, so suppose ${\rm Cov}^*$ is another. One now shows by induction on $T \in {\rm BCov}(u)$ that $\downarrow T \in {\rm Cov}^*(u)$. This completes the proof.
\end{proof}

\begin{coro}{BOsetindependent}
The theory ${\bf CZF} + {\bf DC}$ does not prove that the Brouwer ordinals form a set.
\end{coro}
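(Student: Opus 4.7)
(Of \refcoro{BOsetindependent}.)
The plan is a straightforward reduction to \refcoro{BhasnopresinCZF}. Suppose, for contradiction, that ${\bf CZF} + {\bf DC}$ proves that the Brouwer ordinals form a set. Since ${\bf DC}$ implies ${\bf AC}_\omega$, we may apply \refprop{BSassiteCAC} inside ${\bf CZF} + {\bf DC}$ to conclude that formal Baire space $\mathbf{B}$ is presentable. But this directly contradicts \refcoro{BhasnopresinCZF}, which states that ${\bf CZF} + {\bf DC}$ does \emph{not} prove that $\mathbf{B}$ has a presentation.

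The only subtlety worth noting is that the chain of implications is entirely formalisable within ${\bf CZF} + {\bf DC}$: the construction of ${\rm BCov}(u)$ from $BO$ in \reflemm{BOsetimpliesBCovset} uses only replacement applied to a recursively defined map $BO \to {\rm Pow}(\N^{\lt \N})$, and the verification in \refprop{BSassiteCAC} that this yields a presentation of formal Baire space uses only ${\bf AC}_\omega$ together with \reflemm{hilflemma}, both of which are available. Hence the hypothetical proof that $BO$ is a set would, within ${\bf CZF} + {\bf DC}$, yield a proof that $\mathbf{B}$ is presentable, and then the proof-theoretic argument behind \refcoro{BhasnopresinCZF} (comparing the strength of ${\bf CZF} + {\bf DC}$ with that of ${\bf CZF}$ + Monotone Bar Induction, via \refprop{BIissheavesoverS} and the results of \cite{rathjen06c}) delivers the required contradiction.

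There is no genuine obstacle here beyond checking that all relevant constructions go through in ${\bf CZF} + {\bf DC}$, which they manifestly do, since every ingredient has already been established in the preceding lemmas and propositions.
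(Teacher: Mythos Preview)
Your proof is correct and follows exactly the same route as the paper: reduce to \refcoro{BhasnopresinCZF} via \refprop{BSassiteCAC}, using that ${\bf DC}$ implies ${\bf AC}_\omega$. The paper's version is simply a one-line pointer to \refcoro{BhasnopresinCZF}, leaving implicit the chain of implications you have spelled out.
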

\begin{proof}
Because this theory does not prove that formal Baire space has a presentation (see \refcoro{BhasnopresinCZF}).
\end{proof}

It will follow from the next theorem, whose proof will take the remainder of this appendix, that {\bf CZF} + {\bf AC}$_\omega$ + ``The Brouwer ordinals form a set'' is a theory which is stable under taking sheaves over the double of formal Baire space.

\begin{theo}{preservationofchoiceandbrouwerord} Let $(\mathbb{P}, {\rm Cov})$ be a presentable formal space which is CC. Then the combination of ${\bf AC}_\omega$ and smallness of the Brouwer ordinals implies their joint validity in ${\rm Sh}(\mathbb{P}, {\rm Cov})$. 
\end{theo}

In view of \refprop{preservationofchoice} it suffices to show that the Brouwer ordinals are small in ${\rm Sh}(\mathbb{P}, {\rm Cov})$. To that purpose, we will give an explicit construction of the Brouwer ordinals in this category, from which it can immediately be seen that they are small (the description is a variation on those presented in \cite{bergmoerdijk08b} and \cite{bergmoerdijk10b}).

Let ${\cal V}$ be the class of all well-founded trees, in which 
\begin{itemize}
\item nodes are labelled with triples $(p, \alpha, \varphi)$ with $p$ an element of $\mathbb{P}$, $\alpha$ a countable and disjoint subset of $\downarrow p$ such that $\downarrow \alpha \in {\rm Cov}(p)$ and $\varphi$ a function $\alpha \to \{ 0, 1 \}$,
\item edges into nodes labelled with $(p, \alpha, \varphi)$ are labelled with pairs $(q, n)$ with $q \in \alpha$ and $n \in \N$,
\end{itemize}
in such a way that
\begin{itemize}
\item if a node is labelled with $(p, \alpha, \varphi)$ and $q \in \alpha$ is such that $\varphi(q) = 0$, then there is no edge labelled with $(q, n)$ into this node, but
\item if a node is labelled with $(p, \alpha, \varphi)$ and $q \in \alpha$ is such that $\varphi(q)=1$, then there is for every $n \in \N$ a unique edge into this node labelled with $(q, n)$.
\end{itemize}
Using that the Brouwer ordinals form a set, one can show also that ${\cal V}$ is a set. If $v$ denotes a well-founded tree in ${\cal V}$, we will also use the letter $v$ for the function that assigns to labels of edges into the root of $v$ the tree attached to this edge. So if $(q, n)$ is a label of one of the edges into the root of $v$, we will write $v(q, n)$ for the tree that is attached to this edge; this is again an element of ${\cal V}$. Note that an element of  ${\cal V}$ is uniquely determined by the label of its root and the function we just described. 

We introduce some terminology and notation: we say that a tree $v \in {\cal V}$ is \emph{rooted} at an element $p$ in $\mathbb{P}$, if its root has a label whose first component is $p$. A tree $v \in {\cal V}$ whose root is labelled with $(p, \alpha, \varphi)$ is \emph{composable}, if for any $(q, n)$ with $q \in \alpha$ and $\varphi(q) =1$, the tree $v(q, n)$ is rooted at $q$. We will write ${\cal W}$ for the set of trees that are \emph{hereditarily} composable (i.e. not only are they themselves composable, but the same is true for all their subtrees). 

Next, we define by transfinite recursion a relation $\sim$ on ${\cal V}$:
\begin{center}
\begin{tabular}{lcp{8 cm}}
$v \sim v'$ & $\Leftrightarrow$ & If the root of $v$ is labelled with $(p, \alpha, \varphi)$ and the root of $v'$ with $(p', \alpha', \varphi')$, then $p = p'$ and $p$ is covered by those $r \leq p$ for which there are (necessarily unique) $q \in \alpha$ and $q' \in \alpha'$ such that (1) $r \leq q$ and $r \leq q'$, (2) $\varphi(q) = \varphi'(q')$ and (3) $\varphi(q) = \varphi'(q') =1$ implies $v(q, n) \sim v'(q', n)$ for all $n \in \N$.
\end{tabular}
\end{center}
By transfinite induction one verifies that $\sim$ is an equivalence relation on both ${\cal V}$ and ${\cal W}$. Write ${\cal BO}$ for the quotient of ${\cal W}$ by $\sim$. The following sequence of lemmas establishes that ${\cal BO}$ can be given the structure of a sheaf and is in fact the object of Brouwer ordinals in the category of sheaves.

\begin{lemm}{Wbarpresheaf}
${\cal BO}$ can be given the structure of a presheaf.
\end{lemm}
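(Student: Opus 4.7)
The plan is to take ${\cal BO}(p)$ to be the set of $\sim$-equivalence classes of those trees in ${\cal W}$ that are rooted at $p$, and to define the restriction maps $-\upharpoonright q: {\cal BO}(p) \to {\cal BO}(q)$ for $q \leq p$ by well-founded recursion on the tree. Concretely, given $v \in {\cal W}$ rooted at $p$ with root label $(p, \alpha, \varphi)$, I would first invoke the CC hypothesis on $(\mathbb{P}, {\rm Cov})$, together with ${\bf AC}_\omega$ to make coherent choices, to pick a countable disjoint $\beta \subseteq \downarrow q$ with $\downarrow \beta \in {\rm Cov}(q)$ and $\beta \subseteq q^*(\downarrow \alpha)$. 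Since $\alpha$ is disjoint, each $s \in \beta$ lies below a unique $\pi(s) \in \alpha$. I would then let $v \upharpoonright q$ be the tree whose root is labelled $(q, \beta, \psi)$ with $\psi(s) = \varphi(\pi(s))$, and whose $(s,n)$-subtree, whenever $\psi(s) = 1$, is recursively $v(\pi(s), n) \upharpoonright s$. By construction this tree is well-founded, hereditarily composable, and rooted at $q$, so lies again in ${\cal W}$.

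The next step is to show that $[v \upharpoonright q] \in {\cal BO}(q)$ is independent of the choice of $\beta$, and that $v \sim v'$ implies $v \upharpoonright q \sim v' \upharpoonright q$. Both facts go by transfinite induction on the tree: given two competing countable disjoint refinements $\beta$ and $\beta'$ of $q^*(\downarrow\alpha)$, one passes to a further common countable disjoint refinement $\gamma$ using CC, observes that on $\gamma$ the $\pi$-values into $\alpha$ agree by disjointness, and reads off $\sim$-equivalence of subtrees from the induction hypothesis; this lifts to $\sim$-equivalence at the root by the very definition of $\sim$. The two presheaf axioms then reduce to further transfinite inductions: for identity one checks that $v \upharpoonright p \sim v$ by comparing $\alpha$ with any refinement used to form $v \upharpoonright p$, and for transitivity one compares $(v \upharpoonright q) \upharpoonright r$ with $v \upharpoonright r$ via a common countable disjoint refinement of the two resulting covers of $r$, available from CC together with local character.

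The main obstacle will be the first well-definedness step, namely the independence of $[v \upharpoonright q]$ from the chosen countable disjoint refinement, since the restriction requires such a choice at every node of the tree. All remaining arguments reduce to variants of this one once preservation of $\sim$ has been established, but the book-keeping for the comparison of two arbitrary refinements along an entire well-founded tree is delicate; it is precisely here that the combination of the CC property of $(\mathbb{P}, {\rm Cov})$ and ${\bf AC}_\omega$ pulls its weight, both to produce the refinements and to make the comparisons at different nodes compatible.
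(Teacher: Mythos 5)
Your construction is essentially the one in the paper: ${\cal BO}(p)$ is the set of $\sim$-classes of trees in ${\cal W}$ rooted at $p$, and $-\upharpoonright q$ is obtained by choosing, via the CC property and stability, a countable disjoint refinement $\beta$ of $q^*\!\downarrow\!\alpha$ and transporting the root label along the map sending each $r \in \beta$ to the unique element of $\alpha$ above it. The one place you genuinely diverge is that you restrict the subtrees recursively, replacing $w(a,n)$ by $w(a,n)\upharpoonright r$ at the edge $(r,n)$, whereas the paper's proof only relabels the root and attaches the original subtrees $w(a,n)$ unchanged. Your version is in fact the more defensible one: with the paper's literal one-step definition the subtree at $(r,n)$ remains rooted at $a \geq r$ rather than at $r$, so composability as defined would fail and the result would not visibly land in ${\cal W}$; the recursion is what repairs this. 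The price you correctly identify is that well-definedness (independence of the refinements chosen at every node) must then be checked by transfinite induction via common refinements, a point the paper compresses into ``one easily verifies''. Your account of that check is sound, and the use of ${\bf AC}_\omega$ together with CC to organise the choices is consistent with how these hypotheses are used throughout the appendix.
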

\begin{proof}
Since by definition of $\sim$, all trees $w \in {\cal W}$ in an equivalence class are rooted at the same element, we can say without any danger of ambiguity that an element $\overline{w} \in {\cal BO}$ is rooted at $p \in \mathbb{P}$. We will denote the collection of trees in ${\cal BO}$ rooted at $p$ by ${\cal BO}(p)$. 

Suppose $[w] \in {\cal W}(p)$ and $q \leq p$. If the root of $w$ is labelled by $(p, \alpha, \varphi)$, then there is a countable and disjoint refinement $\beta$ of $q^*\downarrow \alpha$ (by stability and the fact that $(\mathbb{P}, {\rm Cov})$ is a CC-space). For each $r \in \beta$ there is a unique $q \in \alpha$ such that $r \leq q$ (by disjointness), so one can define $\psi: \beta \to \{0, 1\}$ by $\psi(r) = \varphi(q)$ and, whenever $\psi(r) = \varphi(q) = 1$, $v(r, n) = w(q, n)$. The data $(q, \beta, \psi)$ and $v$ determine an element $w' \in {\cal W}(q)$ and we put
\[ [w] \upharpoonright q = [w']. \]
One easily verifies that this is well-defined and gives ${\cal BO}$ the structure of a presheaf.
\end{proof}

\begin{lemm}{Wbarseparated}
${\cal BO}$ is separated.
\end{lemm}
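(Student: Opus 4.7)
The plan is to show that if $[w], [w'] \in {\cal BO}(p)$ agree after restriction to every $q$ in some cover $S$ of $p$, then $w \sim w'$. Since the defining clause of $\sim$ already requires producing a covering sieve of ``good'' elements below $p$, local character of the topology is the natural tool: the covers of each $q \in S$ witnessing the local agreements can be glued to a cover of $p$ witnessing global agreement.

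First I would fix root labels $(p, \alpha, \varphi)$ and $(p, \alpha', \varphi')$ for $w$ and $w'$, and consider an arbitrary $q \in S$. Unfolding the restriction operation defined in \reflemm{Wbarpresheaf}, $w \upharpoonright q$ has root label $(q, \beta_q, \psi_q)$, where $\beta_q$ is a countable disjoint refinement of $q^* \downarrow \alpha$ and $\psi_q(r) = \varphi(a)$ for the unique $a \in \alpha$ with $r \leq a$; similarly $w' \upharpoonright q$ has root label $(q, \beta'_q, \psi'_q)$. The hypothesis $w \upharpoonright q \sim w' \upharpoonright q$ then supplies a cover $R_q \in {\rm Cov}(q)$ of $q$ such that each $r \in R_q$ lies below unique $\tilde r \in \beta_q$ and $\tilde r' \in \beta'_q$ with $\psi_q(\tilde r) = \psi'_q(\tilde r')$, together with $\sim$-equivalences $(w \upharpoonright q)(\tilde r, n) \sim (w' \upharpoonright q)(\tilde r', n)$ in the case the common value is $1$.

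Translating back through the refinements, each $r \in R_q$ lies below unique elements $a \in \alpha$ and $a' \in \alpha'$ (namely the witnesses for $\tilde r$ and $\tilde r'$), with $\varphi(a) = \psi_q(\tilde r) = \psi'_q(\tilde r') = \varphi'(a')$; and by the definition of restriction on children, $(w \upharpoonright q)(\tilde r, n)$ is literally $w(a, n)$ and similarly on the primed side, so the required subtree equivalences $w(a, n) \sim w'(a', n)$ are immediate from the given ones. Thus $R_q$ is contained in the sieve $G$ of elements at which the good condition defining $w \sim w'$ holds (this sieve is downward closed by disjointness of $\alpha$ and $\alpha'$). So $q^* G \supseteq R_q \in {\rm Cov}(q)$ for every $q \in S$, and local character of ${\rm Cov}$ yields $G \in {\rm Cov}(p)$, i.e.\ $w \sim w'$.

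The main obstacle I anticipate is purely bookkeeping: verifying that the subtrees of $w \upharpoonright q$ and $w' \upharpoonright q$ referenced by the $\sim$-clause really do coincide with the original subtrees $w(a, n)$ and $w'(a', n)$, and that uniqueness of the witnesses $a, a'$ transfers correctly through the disjoint refinements. Both points reduce to the disjointness of $\alpha, \alpha', \beta_q, \beta'_q$ and the explicit formula for restriction on children, so no additional transfinite induction is required and the argument proceeds in one pass.
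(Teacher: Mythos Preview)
Your proposal is correct and follows essentially the same route as the paper: define the sieve $G$ (the paper calls it $R$) of ``good'' elements witnessing the defining clause of $w \sim w'$, then use the hypothesis at each $q$ in the given cover to produce a cover $R_q \subseteq q^*G$, and conclude by local character that $G$ covers $p$. The paper compresses your middle paragraph into the single phrase ``unwinding the definitions in $[w] \upharpoonright t = [w'] \upharpoonright t$ gives us the existence of a covering sieve $S \subseteq t^*R$'', whereas you spell out how the refinement data $\beta_q, \psi_q$ translate back to witnesses $a \in \alpha$, $a' \in \alpha'$ and why the subtree equivalences transfer; this extra detail is sound and arguably clearer, but the argument is the same.
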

\begin{proof} 
Suppose $T$ is a sieve covering $p$ and $w, w' \in {\cal W}(p)$ are such that $[w] \upharpoonright t = [w'] \upharpoonright t$ for all $t \in T$. We have to show $w \sim w'$, so suppose $(p, \alpha, \varphi)$ is the label of the root of $w$ and $(p', \alpha', \varphi')$ is the label of the root of $w'$. Since $w'$ is rooted at $p'$, we have $p = p'$.

Let $R$ consist of those $r  \in (\downarrow\alpha) \cap (\downarrow \alpha')$, for which there are $q \in \alpha$ and $q' \in \alpha'$ such that (1) $r \leq q, q'$, (2) $\varphi(q) = \varphi'(q')$ and (3) $\varphi(q) = \varphi'(q') = 1$ implies $w(q, n) \sim w'(q', n)$ for all $n \in \N$. $R$ is a sieve, and the statement of the lemma will follow once we show that it is covering.

Fix an element $t \in T$. Unwinding the definitions in $[w] \upharpoonright t = [w'] \upharpoonright t$ gives us the existence of a covering sieve $S \subseteq t^*(\downarrow \alpha) \cap t^*(\downarrow \alpha')$ such that $S \subseteq t^*R$. So $R$ is a covering sieve by local character.
\end{proof}

\begin{lemm}{Wbarsheaf}
${\cal BO}$ is a sheaf.
\end{lemm}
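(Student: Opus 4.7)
The plan is to verify the sheaf axiom by producing, for each compatible family on a covering sieve, an amalgamation; uniqueness comes for free since \reflemm{Wbarseparated} already gives separatedness. Fix $p \in \mathbb{P}$, a covering sieve $S \in {\rm Cov}(p)$, and a compatible family $\{[w_s] \in {\cal BO}(s) : s \in S\}$. Using the CC-assumption, choose a countable disjoint $\alpha \subseteq S$ with $\downarrow\alpha \in {\rm Cov}(p)$; then, using ${\bf AC}_\omega$, pick representatives $w_q \in {\cal W}(q)$ of the classes $[w_q]$ for $q \in \alpha$, and write $(q, \alpha_q, \varphi_q)$ for the label of the root of $w_q$.

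Next I would build an amalgamation $w \in {\cal W}(p)$ by ``gluing'' the trees $w_q$ at their roots into a single tree rooted at $p$. Concretely, set $\alpha' = \bigcup_{q \in \alpha} \alpha_q \subseteq \downarrow p$; this is countable (a countable union of countable sets, via ${\bf AC}_\omega$), pairwise disjoint (since the $q \in \alpha$ are disjoint and $\alpha_q \subseteq \downarrow q$), and satisfies $\downarrow\alpha' \in {\rm Cov}(p)$ by local character (combining $\downarrow\alpha \in {\rm Cov}(p)$ with $\downarrow\alpha_q \in {\rm Cov}(q)$ for each $q \in \alpha$). Define $\varphi' : \alpha' \to \{0,1\}$ by $\varphi'(r) = \varphi_q(r)$ whenever $r \in \alpha_q$ (well-defined by disjointness), label the root of $w$ with $(p, \alpha', \varphi')$, and for each edge label $(r, n)$ with $\varphi'(r) = 1$ (so $r \in \alpha_q$ for a unique $q \in \alpha$), set $w(r, n) = w_q(r, n)$. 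Every proper subtree of $w$ is a proper subtree of some $w_q \in {\cal W}$, so $w$ is well-founded and hereditarily composable, i.e., $w \in {\cal W}(p)$.

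To check that $[w]$ is the amalgamation, I would first compute $[w]\upharpoonright q$ for $q \in \alpha$ using $\alpha_q$ as the countable disjoint refinement of $q^*(\downarrow\alpha') = \downarrow\alpha_q$; this recovers $w_q$ on the nose, so $[w]\upharpoonright q = [w_q]$. For an arbitrary $s \in S$, the sieve $s^*(\downarrow\alpha)$ covers $s$, and for each $r \in s^*(\downarrow\alpha)$ (so $r \leq s$ and $r \leq q$ for some $q \in \alpha \subseteq S$, whence $r \in S$), compatibility of the original family and the presheaf property give
\[ [w_s]\upharpoonright r \;=\; [w_r] \;=\; [w_q]\upharpoonright r \;=\; ([w]\upharpoonright q)\upharpoonright r \;=\; [w]\upharpoonright r. \]
Separatedness then forces $[w_s] = [w]\upharpoonright s$, as required.

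The main technical nuisance is the predicative book-keeping around countability and disjointness: one must verify carefully that $\alpha'$ really is countable, which needs ${\bf AC}_\omega$ to splice together bijections $\N \to \alpha_q$ into a single enumeration, and that the tree one writes down is both well-founded and hereditarily composable. Neither point is deep, but one has to be attentive to where choice enters --- precisely the place where the hypothesis ${\bf AC}_\omega$ of \reftheo{preservationofchoiceandbrouwerord} is used.
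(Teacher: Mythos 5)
Your proof is correct and follows essentially the same route as the paper: pass to a countable disjoint refinement $\alpha$ of $S$, use ${\bf AC}_\omega$ to choose representatives, glue their root data into a single tree rooted at $p$, and verify that restricting to each $q \in \alpha$ recovers $[w_q]$. You are in fact slightly more explicit than the paper in spelling out the final step (extending from $\alpha$ to all of $S$ via compatibility and separatedness) and in noting where ${\bf AC}_\omega$ is needed for the countability of the union, but these are refinements of the same argument, not a different one.
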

\begin{proof} Let $S$ be a covering sieve on $p$ and suppose we have a compatible family of elements $(\overline{w}_q \in {\cal BO})_{q \in S}$. Let $\alpha$ be a countable and disjoint refinement of $S$ and use ${\bf AC}_\omega$ to choose for every element $q \in \alpha$ a representative $(w_q \in {\cal W})_{q \in \alpha}$ such that $[w_q] = \overline{w}_q$. For every $q \in \alpha$ the representative $w_q$ has a root labelled by something of form $(q, \beta_q, \varphi_q)$. If we put $\beta = \bigcup_{q \in \alpha} \beta_q$, then $\beta$ is countable and disjoint and $\downarrow \beta$ covers $p$ (by local character). If $r \in \beta$, then there is a unique $q \in \alpha$ such that $r \in \beta_q$ (by disjointness), so therefore it makes sense to define $\varphi(r) = \varphi_q(r)$ and $w(r, n) = w_q(r, n)$. 

We claim the element $[w] \in {\cal BO}$ determine by the data $(p, \beta, \varphi)$ and the function $w$ just defined is the amalgamation of the elements $(\overline{w}_q \in {\cal BO})_{q \in S}$. To that purpose, it suffices to prove that $[w] \upharpoonright q = \overline{w}_q = [w_q]$ for all $q \in \alpha$. This is not hard, because if $q \in \alpha$ and $r \in \beta_q$, then $w(r, n) = w_q(r, n)$, by construction. This completes the proof.
\end{proof}

\begin{lemm}{Wbaralgebra}
${\cal BO}$ is an algebra for the functor $F(X) = 1 + X^\N$.
\end{lemm}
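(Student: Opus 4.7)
The plan is to define an arrow $\zeta : 1 + {\cal BO}^\N \to {\cal BO}$ in ${\rm Sh}(\mathbb{P}, {\rm Cov})$ by specifying its two components, and then to verify that each is well-defined on the relevant equivalence classes and natural in the stage $p$. The definitions mirror the two generators of the W-type of Brouwer ordinals, and exploit the fact that a root label of the form $(p, \{p\}, \varphi)$ with $\varphi(p) \in \{0,1\}$ encodes the local dichotomy between ``star'' and ``sup'' at the stage $p$.

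For the ``zero'' component $1 \to {\cal BO}$: at stage $p$, I send the unique element of $1(p)$ to the class $[w_0^p]$, where $w_0^p$ has a single node labelled $(p, \{p\}, \varphi_0)$ with $\varphi_0(p)=0$. Such a tree is vacuously composable (no edges point into the root), and hence hereditarily composable. Naturality under restriction to $q \leq p$ is immediate from the recipe in the proof of \reflemm{Wbarpresheaf}, taking the refinement $\beta = \{q\}$ of $q^*\!\downarrow\!\{p\}$: the restricted tree is again of the form $w_0^q$.

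For the ``sup'' component ${\cal BO}^\N \to {\cal BO}$: by \refrema{pureelements} it suffices to specify the action on ``pure'' sections, i.e.~on families $(\overline{w}_n)_{n \in \N}$ with each $\overline{w}_n \in {\cal BO}(p)$, and then extend by the sheaf property. Given such a family, I use ${\bf AC}_\omega$ to pick representatives $w_n \in {\cal W}$ of $\overline{w}_n$ (each automatically rooted at $p$, by definition of ${\cal BO}(p)$), form the tree $w$ whose root is labelled $(p, \{p\}, \varphi_1)$ with $\varphi_1(p)=1$ and whose subtrees are declared by $w(p,n) := w_n$, and output $[w]$. Composability at the root is precisely the statement that each $w_n$ is rooted at $p$, and composability deeper down is inherited from hereditary composability of the $w_n$.

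Well-definedness of the sup map follows from inspection of the recursive definition of $\sim$: a different choice of representatives $w'_n \sim w_n$ yields a tree $w'$ with the same root label as $w$, and $\{p\}$ itself is a sieve witnessing $w \sim w'$, using condition (3) with $q = q' = p$. Naturality under restriction to $r \leq p$ again follows from \reflemm{Wbarpresheaf}, once one reads the formula ``$v(r,n) = w(q',n)$'' there as shorthand for the recursive restriction of $w(q',n)$ to $r \leq q'$: under this reading, the restriction of $[w]$ to $r$ is precisely the sup of the restricted family. The main subtlety lies exactly in this recursive interpretation of restriction (without which the restricted tree would fail to be composable) together with the appeal to ${\bf AC}_\omega$, which is available by hypothesis; with these two points handled, combining the zero and sup components yields the desired $F$-algebra structure $\zeta$.
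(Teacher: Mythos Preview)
Your proof is correct and follows essentially the same construction as the paper: both define the zero map via the single-node tree with root $(p,\{p\},\varphi_0)$ and the sup map by using ${\bf AC}_\omega$ to choose representatives and gluing them under a root $(p,\{p\},\varphi_1)$; the paper simply leaves well-definedness and naturality to the reader, whereas you spell them out. One minor remark: the appeal to \refrema{pureelements} is unnecessary here, since by the same reasoning behind the paper's formula $\N^\N(p)=\N(p)^\N$ one has ${\cal BO}^\N(p)={\cal BO}(p)^\N$ outright, so every section of ${\cal BO}^\N$ is already a family $(\overline{w}_n)_{n\in\N}$ and no ``extension by the sheaf property'' is needed.
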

\begin{proof} We have to describe a natural transformation ${\rm sup}: F({\cal BO}) \to {\cal BO}$, i.e., a natural transformation $1 \to {\cal BO}$ and a natural transformation ${\cal BO}^{\N} \to {\cal BO}$. For the natural transformation $1 \to {\cal BO}$, we define ${\rm sup}_p(*)$ on $* \in 1(p)$ to be the equivalence class of the unique element in ${\cal W}$ determined by the data $(p, \{ p \}, \varphi)$ with $\varphi(p) = 0$. To define the natural transformation ${\cal BO}^{\N} \to {\cal BO}$ on $\overline{t}: \N \to {\cal BO}(p)$, we use ${\bf AC}_\omega$ to choose a function $t: \N \to {\cal W}(p)$ such that $[t(n)] = \overline{t}(n)$ for all $n \in \N$. Then we define ${\rm sup}_p(\overline{t})$ to be the equivalence class of the element $w$ determined by the data $(p, \{ p \}, \varphi)$ with $\varphi(p) = 1$ and $w(p, n) = t(n)$. We leave the verification that this makes ${\rm sup}$ well-defined and natural to the reader.
\end{proof}

\begin{lemm}{Wbarinitialalgebra}
${\cal BO}$ is the initial algebra for the functor $F(X) = 1 + X^\N$.
\end{lemm}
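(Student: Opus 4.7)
The plan is to establish the universal property of the initial $F$-algebra: given any algebra $\sigma = [\sigma_0, \sigma_1] : 1 + X^{\N} \to X$ in ${\rm Sh}(\mathbb{P}, {\rm Cov})$, I would produce a unique natural transformation $h: \mathcal{BO} \to X$ commuting with the algebra structures.

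First, I would define an auxiliary map $\tilde h: \mathcal{W} \to X$ by transfinite recursion on the well-founded tree structure. Given $w \in \mathcal{W}$ with root labelled $(p, \alpha, \varphi)$, I set, for each $q \in \alpha$, $x_q = (\sigma_0)_q(*)$ if $\varphi(q) = 0$, and $x_q = (\sigma_1)_q(n \mapsto \tilde h(w(q, n)))$ if $\varphi(q) = 1$. Since $\alpha$ is disjoint, the family $(x_q)_{q \in \alpha}$ is trivially compatible; since $\downarrow \alpha \in {\rm Cov}(p)$ and $X$ is a sheaf, it amalgamates to a unique element $\tilde h(w) \in X(p)$. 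Next, by a second transfinite induction, I would show that $w \sim w'$ implies $\tilde h(w) = \tilde h(w')$: if the roots are $(p, \alpha, \varphi)$ and $(p, \alpha', \varphi')$, then the definition of $\sim$ yields a cover of $p$ on which the restrictions of $\tilde h(w)$ and $\tilde h(w')$ are computed from matching data, so by the sheaf property of $X$ they agree in $X(p)$. Hence $\tilde h$ descends to a map $h: \mathcal{BO} \to X$.

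With $h$ in hand, the verifications that $h$ is a morphism of sheaves and that it commutes with the algebra structures are routine, using the explicit descriptions of restriction and of ${\rm sup}$ given in \reflemm{Wbarpresheaf} and \reflemm{Wbaralgebra}. For uniqueness, let $k: \mathcal{BO} \to X$ be another algebra morphism. By transfinite induction on a representative $w \in \mathcal{W}(p)$ with root $(p, \alpha, \varphi)$, I would show $k_p([w]) = h_p([w])$. For each $q \in \alpha$, unwinding the definition of restriction in \reflemm{Wbarpresheaf} shows that $[w] \upharpoonright q$ equals ${\rm sup}_q(*)$ when $\varphi(q) = 0$, and ${\rm sup}_q(n \mapsto [w(q, n)])$ when $\varphi(q) = 1$. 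The algebra morphism condition on $k$, together with the inductive hypothesis, then forces $k_q([w] \upharpoonright q) = h_q([w] \upharpoonright q)$ for every $q \in \alpha$; since $\downarrow \alpha$ covers $p$ and $X$ is a sheaf, it follows that $k_p([w]) = h_p([w])$.

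The main obstacle I expect is the descent of $\tilde h$ to $\sim$. The equivalence $\sim$ is not pointwise but involves passing to a cover, so the transfinite induction must pair up the partitions $\alpha$ and $\alpha'$ at the two roots via the witnessing cover and apply the inductive hypothesis to the matched subtrees at each of its elements. Countable choice will also enter at several places, most visibly in the $\mathcal{BO}^{\N}$ branch of the algebra condition, where one must select representatives in $\mathcal{W}(q)$ of countably many $\sim$-classes simultaneously in order to compute values of $(\sigma_1)_q$.
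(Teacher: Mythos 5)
Your proof is essentially correct, but it takes a genuinely different route from the paper. The paper does not construct the mediating morphism at all: it invokes a general characterization of initial algebras (Theorem 26 of \cite{berg05}), by which it suffices to check that ${\rm sup}: F({\cal BO}) \to {\cal BO}$ is monic and that ${\cal BO}$ has no proper $F$-subalgebras; the latter is proved by an induction over the well-founded trees in ${\cal V}$, using exactly the computation you use for uniqueness, namely that $[w] \upharpoonright q$ equals ${\rm sup}_q(*)$ or ${\rm sup}_q(n \mapsto [w(q,n)])$ according to the value of $\varphi(q)$, followed by gluing over the cover $\downarrow \alpha$. You instead build the fold $h$ explicitly by recursion on the tree structure and verify its uniqueness directly. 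What your approach buys is self-containedness (no appeal to an external theorem) and an explicit description of the unique algebra morphism; what it costs is precisely the step you identify as the main obstacle, the descent of $\tilde h$ through $\sim$. That step is genuinely the delicate one: since $\sim$ is witnessed by a covering sieve $R$ of $p$ rather than by a pointwise match of the labels, you must show for each $r \in R$, with matched $q \in \alpha$ and $q' \in \alpha'$, that $\tilde h(w) \upharpoonright r = x_q \upharpoonright r$ and $\tilde h(w') \upharpoonright r = x'_{q'} \upharpoonright r$ agree, using naturality of $\sigma_0, \sigma_1$ and an inductive hypothesis formulated so that it applies to the matched subtrees after restriction to $r$; separatedness of $X$ then gives equality in $X(p)$. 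This is all routine but needs the strengthened induction hypothesis you allude to (compatibility of $\tilde h$ with the explicit restriction of trees from \reflemm{Wbarpresheaf}). The paper's indirect route avoids defining anything by recursion modulo $\sim$ and therefore never meets this issue, which is presumably why it was chosen. Your accounting of where ${\bf AC}_\omega$ enters matches the paper's own uses in \reflemm{Wbaralgebra} and \reflemm{Wbarsheaf}.
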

\begin{proof}
We follow the usual strategy: we show that ${\rm sup}: F({\cal BO}) \to {\cal BO}$ is monic and that ${\cal BO}$ has no proper $F$-subalgebras (i.e., we apply Theorem 26 of \cite{berg05}). It is straightforward to check that sup is monic, so we only show that ${\cal BO}$ has no proper $F$-subalgebras, for which we use the inductive properties of ${\cal V}$. 

Let $I$ be a sheaf and $F$-subalgebra of ${\cal BO}$. We claim that 
\[ J = \{ v \in {\cal V} \, : \, \mbox{if } v \mbox{ is hereditarily composable, then } [v] \in I \} \]
is such that if all immediate subtrees of an element $v \in {\cal V}$ belong to it, then so does $v$ itself. 

Proof: Suppose $v \in {\cal V}$ is a hereditarily composable tree such that all its immediate subtrees belong to $J$. Assume moreover that $(p, \alpha, \varphi)$ is the label of its root. We know that for all $n \in \N$ and $q \in \alpha$ with $\varphi(q) = 1$, $[v(f, y)] \in I$ and our aim is to show that $[v] \in I$.

For the moment fix an element $q \in \alpha$. Either $\varphi(q) = 0$ or $\varphi(q) = 1$. If $\varphi(q) = 0$, then $[v] \upharpoonright q$ equals ${\rm sup}_q(*)$ and therefore $[v] \upharpoonright q \in I$, because $I$ is a $F$-algebra. If $\varphi(q) = 1$, then we may put $\overline{t}(n) = [v(q, n)]$ and $[v] \upharpoonright q$ will equal ${\rm sup}_q(\overline{t})$. Therefore  $[v] \upharpoonright q \in I$, again because $J$ is a $F$-algebra. So for all $q \in \alpha$ we have $[v] \upharpoonright q \in I$. But then it follows that $[v] \in I$, since $I$ is a sheaf.

We conclude that $J = {\cal V}$ and $I = {\cal BO}$.
\end{proof}

This completes the proof of the correctness of our description of the Brouwer ordinals in a category of sheaves and thereby of \reftheo{preservationofchoiceandbrouwerord}.

\bibliographystyle{plain} \bibliography{ast}

\begin{thebibliography}{10}

\bibitem{aczel78}
P.~Aczel.
\newblock The type theoretic interpretation of constructive set theory.
\newblock In {\em Logic Colloquium '77 (Proc. Conf., Wroc\l aw, 1977)},
  volume~96 of {\em Stud. Logic Foundations Math.}, pages 55--66.
  North-Holland, Amsterdam, 1978.

\bibitem{aczel82}
P.~Aczel.
\newblock The type theoretic interpretation of constructive set theory: choice
  principles.
\newblock In {\em The L. E. J. Brouwer Centenary Symposium (Noordwijkerhout,
  1981)}, volume 110 of {\em Stud. Logic Found. Math.}, pages 1--40.
  North-Holland Publishing Co., Amsterdam, 1982.

\bibitem{aczel86}
P.~Aczel.
\newblock The type theoretic interpretation of constructive set theory:
  inductive definitions.
\newblock In {\em Logic, methodology and philosophy of science, VII (Salzburg,
  1983)}, volume 114 of {\em Stud. Logic Found. Math.}, pages 17--49.
  North-Holland Publishing Co., Amsterdam, 1986.

\bibitem{aczel06}
P.~Aczel.
\newblock Aspects of general topology in constructive set theory.
\newblock {\em Ann. Pure Appl. Logic}, 137(1-3):3--29, 2006.

\bibitem{aczelrathjen01}
P.~Aczel and M.~Rathjen.
\newblock Notes on constructive set theory.
\newblock Technical Report No. 40, Institut Mittag-Leffler, 2000/2001.

\bibitem{beeson77}
M.J. Beeson.
\newblock Principles of continuous choice and continuity of functions in formal
  systems for constructive mathematics.
\newblock {\em Ann. Math. Logic}, 12(3):249--322, 1977.

\bibitem{bell85}
J.~L. Bell.
\newblock {\em Boolean-valued models and independence proofs in set theory},
  volume~12 of {\em Oxford Logic Guides}.
\newblock Oxford University Press, New York, second edition, 1985.

\bibitem{berg05}
B.~van~den Berg.
\newblock Inductive types and exact completion.
\newblock {\em Ann. Pure Appl. Logic}, 134:95--121, 2005.

\bibitem{bergmoerdijk08}
B.~van~den Berg and I.~Moerdijk.
\newblock Aspects of predicative algebraic set theory {I}: {E}xact
  {C}ompletion.
\newblock {\em Ann. Pure Appl. Logic}, 156(1), 2008.

\bibitem{bergmoerdijk08b}
B.~van~den Berg and I.~Moerdijk.
\newblock W-types in sheaves.
\newblock arXiv:0810.2398, 2008.

\bibitem{bergmoerdijk09}
B.~van~den Berg and I.~Moerdijk.
\newblock Aspects of predicative algebraic set theory, {II}: realizability.
\newblock {\em Theoret. Comput. Sci.}, 412(20):1916--1940, 2011.

\bibitem{bergmoerdijk10b}
B.~van~den Berg and I.~Moerdijk.
\newblock Aspects of predicative algebraic set theory {III}: Sheaves.
\newblock arXiv:0912.1242. Accepted for publication in the \emph{Proceedings of
  the London Mathematical Society}, 2011.

\bibitem{bergmoerdijk10}
B.~van~den Berg and I.~Moerdijk.
\newblock A note on the {A}xiom of {M}ultiple {C}hoice.
\newblock In preparation, 2011.

\bibitem{coquandetal03}
T.~Coquand, G.~Sambin, J.~Smith, and S.~Valentini.
\newblock Inductively generated formal topologies.
\newblock {\em Ann. Pure Appl. Logic}, 124(1-3):71--106, 2003.

\bibitem{curi11}
G.~Curi.
\newblock Topological inductive definitions.
\newblock Kurt G\"odel Centenary Research Prize Fellowships 2010. To appear in
  the \emph{Annals of Pure and Applied Logic}, 2011.

\bibitem{fourmangrayson82}
M.~P. Fourman and R.~J. Grayson.
\newblock Formal spaces.
\newblock In {\em The {L}. {E}. {J}. {B}rouwer {C}entenary {S}ymposium
  ({N}oordwijkerhout, 1981)}, volume 110 of {\em Stud. Logic Found. Math.},
  pages 107--122. North-Holland, Amsterdam, 1982.

\bibitem{fourmanhyland79}
M.~P. Fourman and J.~M.~E. Hyland.
\newblock Sheaf models for analysis.
\newblock In {\em Applications of sheaves (Proc. Res. Sympos. Appl. Sheaf
  Theory to Logic, Algebra and Anal., Univ. Durham, Durham, 1977)}, volume 753
  of {\em Lecture Notes in Math.}, pages 280--301. Springer, Berlin, 1979.

\bibitem{fourman84}
M.P. Fourman.
\newblock Continuous truth. {I}. {N}onconstructive objects.
\newblock In {\em Logic colloquium '82 ({F}lorence, 1982)}, volume 112 of {\em
  Stud. Logic Found. Math.}, pages 161--180. North-Holland, Amsterdam, 1984.

\bibitem{gambino02}
N.~Gambino.
\newblock {\em Sheaf interpretations for generalised predicative intuitionistic
  systems}.
\newblock PhD thesis, University of Manchester, 2002.

\bibitem{gambino06}
N.~Gambino.
\newblock Heyting-valued interpretations for constructive set theory.
\newblock {\em Ann. Pure Appl. Logic}, 137(1-3):164--188, 2006.

\bibitem{grayson83}
R.~J. Grayson.
\newblock Forcing in intuitionistic systems without power-set.
\newblock {\em J. Symbolic Logic}, 48(3):670--682, 1983.

\bibitem{grayson84}
R.~J. Grayson.
\newblock Heyting-valued semantics.
\newblock In {\em Logic colloquium '82 ({F}lorence, 1982)}, volume 112 of {\em
  Stud. Logic Found. Math.}, pages 181--208. North-Holland, Amsterdam, 1984.

\bibitem{hayashi80}
S.~Hayashi.
\newblock Derived rules related to a constructive theory of metric spaces in
  intuitionistic higher order arithmetic without countable choice.
\newblock {\em Ann. Math. Logic}, 19(1-2):33--65, 1980.

\bibitem{hayashi82}
S.~Hayashi.
\newblock A note on bar induction rule.
\newblock In {\em The {L}. {E}. {J}. {B}rouwer {C}entenary {S}ymposium
  ({N}oordwijkerhout, 1981)}, volume 110 of {\em Stud. Logic Found. Math.},
  pages 149--163. North-Holland, Amsterdam, 1982.

\bibitem{johnstone82}
P.T. Johnstone.
\newblock {\em Stone spaces}, volume~3 of {\em Cambridge Studies in Advanced
  Mathematics}.
\newblock Cambridge University Press, Cambridge, 1982.

\bibitem{maclanemoerdijk92}
S.~Mac~Lane and I.~Moerdijk.
\newblock {\em Sheaves in geometry and logic -- A first introduction to topos
  theory}.
\newblock Universitext. Springer-Verlag, New York, 1992.

\bibitem{moerdijk84}
I.~Moerdijk.
\newblock Heine-{B}orel does not imply the fan theorem.
\newblock {\em J. Symbolic Logic}, 49(2):514--519, 1984.

\bibitem{moerdijkpalmgren00}
I.~Moerdijk and E.~Palmgren.
\newblock Wellfounded trees in categories.
\newblock {\em Ann. Pure Appl. Logic}, 104(1-3):189--218, 2000.

\bibitem{moerdijkpalmgren02}
I.~Moerdijk and E.~Palmgren.
\newblock Type theories, toposes and constructive set theory: predicative
  aspects of {AST}.
\newblock {\em Ann. Pure Appl. Logic}, 114(1-3):155--201, 2002.

\bibitem{rathjen05}
M.~Rathjen.
\newblock The disjunction and related properties for constructive
  {Z}ermelo-{F}raenkel set theory.
\newblock {\em J. Symbolic Logic}, 70(4):1232--1254, 2005.

\bibitem{rathjen06c}
M.~Rathjen.
\newblock A note on bar induction in constructive set theory.
\newblock {\em MLQ Math. Log. Q.}, 52(3):253--258, 2006.

\bibitem{sambin03}
G.~Sambin.
\newblock Some points in formal topology.
\newblock {\em Theoret. Comput. Sci.}, 305(1-3):347--408, 2003.

\bibitem{troelstravandalen88b}
A.~S. Troelstra and D.~van Dalen.
\newblock {\em Constructivism in mathematics. {V}ol. {II}}, volume 123 of {\em
  Studies in Logic and the Foundations of Mathematics}.
\newblock North-Holland Publishing Co., Amsterdam, 1988.
\newblock An introduction.

\end{thebibliography}

\end{document}